\DeclareMathOperator*{\argmin}{argmin}
\DeclareMathOperator{\col}{col}
 \theoremstyle{plain}
    \newtheorem{theorem}{Theorem}[section]
    \newtheorem{corollary}[theorem]{Corollary}
    \newtheorem{lemma}[theorem]{Lemma}
 \theoremstyle{definition}
    \newtheorem{remark}[theorem]{Remark}
 \theoremstyle{remark}
    \newtheorem{example}[theorem]{Example}
\newcommand{\D}{\displaystyle}
\DeclareMathOperator{\Span}{span}
\DeclareMathOperator{\Tol}{TOL}
\newcommand{\cpnorm}{\text{cp}}
\newcommand{\cpinfnorm}{{\cpnorm\!\infty}}
\newcommand{\loc}{\textrm{loc}}
\newcommand{\uno}{\textrm{first}}
\newcommand{\dos}{\textrm{loc}}
\newcommand{\tres}{\textrm{last}}
\newcommand\NN{\mathbb N}
\newcommand\BB{\mathcal B}
\newcommand{\RR}{\mathbb R}
\renewcommand{\Xi}{{\bm\xi}}
\newcommand{\Mu}{{\bm\mu}}
\renewcommand{\nu}{{\bm\nu}}
\newcommand{\norm}[1]{{\left\vert\kern-0.25ex\left\vert\kern-0.25ex\left\vert #1 
    \right\vert\kern-0.25ex\right\vert\kern-0.25ex\right\vert}}
\newcommand{\icero}{{i_0}}
\newcommand{\jcero}{{j_0}}
\newcommand{\cc}[1]{\hat{#1}}
\newcommand{\rr}{{\bf r}_\loc}
\newcommand{\jj}{{\bf j}_\loc}
\newcommand\SSS{\mathcal S}
\newcommand\JJJ{\mathcal J}
\newcommand{\EEE}[1]{\mathbb{E}_{\Xi,{#1}}}
\begin{document}

\title{\bf Error analysis for local coarsening \\in univariate spline spaces}
\author{Silvano Figueroa, Eduardo M. Garau \& Pedro Morin}
	\affil{\small Universidad Nacional del Litoral, \\
 Consejo Nacional de Investigaciones Cient\'ificas y T\'ecnicas, \\
 FIQ, Santa Fe, Argentina}

\maketitle

\begin{abstract} 
In this article we analyze the error produced by the removal of an arbitrary knot from a spline function. When a knot has multiplicity greater than one, this implies a reduction of its multiplicity by one unit. In particular, we deduce a very simple formula to compute the error in terms of some neighboring knots and a few control points of the considered spline. Furthermore, we show precisely how this error is related to the jump of a derivative of the spline at the knot. We then use the developed theory to propose efficient and very low-cost local error indicators and adaptive coarsening algorithms. Finally, we present some numerical experiments to illustrate their performance and show some applications.

\end{abstract}


\begin{quote}\small
\textbf{Keywords:} 	data reduction, knot removal, coarsening, compression, B-splines
\end{quote}

{{\bf Acknowledgements.} This work was partially supported by Consejo Nacional de Investigaciones Cient\'ificas y T\'ecnicas through grant PIP 2021-2023 (11220200101180CO), by Agencia Nacional de Promoci\'on Cient\'ifica y Tecnol\'ogica through grant PICT-2020-SERIE A-03820, and by Universidad Nacional del Litoral through grant CAI+D-2020 50620190100136LI. This support is gratefully acknowledged.}

\section{Introduction} \label{Introducción}

Let us consider a univariate polynomial spline space $\SSS$ of a fixed degree $p\in\NN$. It is usual to associate a corresponding knot vector $\Xi$ and a B-spline basis~\cite{DeBoor,Schumi} so that each $s\in\SSS$ is uniquely determined by $n:=\dim \SSS = \#\Xi-(p+1)$ coefficients referred to as control points of the spline function $s$.

In computer aided design and for different practical purposes, it is useful to enrich such a spline space while maintaining the same spline function. This procedure is known as \emph{knot insertion}~\cite{Boehm80, Oslo}, and a closed formula for updating the control points using the so called Oslo Algorithm is well established. 

On the other hand, it is not possible in general to represent a spline exactly in a coarser space. For the case when some knots can be removed without changing the spline function some algorithms have been proposed in~\cite{Tiller92}. Additionally, the general problem of transforming a spline function between B-spline representations on two arbitrary knot
vectors was considered in~\cite[Chapter 5]{GoldmanLycheBook93}, where the transformation was studied for splines belonging to both spline spaces.

As we mentioned above, it is always possible to represent exactly a spline in a finer space, but the reverse procedure is not possible in general without changing the spline.  
It is thus interesting to analyze the problem of obtaining a suitable approximation $\cc{s}\in\cc{\SSS}$ of a spline $s\in\SSS$, where $\cc{\SSS}\subset \SSS$ is a coarser spline space, i.e., a spline space of smaller dimension. 
This problem involves two mains steps. 
\begin{itemize}
    \item First, we need to decide which are the more suitable knots to remove from $\Xi$ in order to define the coarser knot vector $\cc{\Xi}$. 
    \item Then, once the coarser space $\cc{\SSS}$ is determined, we have to define the spline $\cc{s}$ that approximates $s$. 
   
\end{itemize}

 As far as we know, the notion of knot removal in this sense was first studied in~\cite{Handscomb87}, by considering a reversal of the Oslo algorithm.  Although an in-depth analysis of the error incurred in when removing knots from a given spline was not performed in that article, an interesting discussion on the (non-)uniqueness of solution to the minimax problem was presented.

The knot removal process can be considered as the inverse of the knot insertion algorithm. If these operations were reversible, the control point vector after knot removal would be the solution of a linear system associated to the knot insertion matrix. Such a system has no solution in general and it is thus necessary to consider some generalized solutions. A local construction of the control points after a single knot removal is proposed in~\cite{ECK}, where an analysis and a strategy for choosing a good approximation are presented; a study of which knots are more convenient to remove from a given spline is not provided though. 

It is worth noting that finding the best approximation from the coarser space in a given norm can become costly and involve solving a global problem. It would be convenient to have a localized method, which is inexpensive and updates only the coefficients corresponding to knots which are close to the one being removed.

The goal of this article is twofold. 
First, we analyze the error incurred in when removing a single knot from a spline $s\in\SSS$, defined as
\begin{equation}\label{E:intro}
    \EEE{\jcero}(s):= \min_{g\in\cc{\SSS}} \|g - s \|,
\end{equation}
where $\cc{\SSS}$ is the spline space associated to the knot vector $\cc{\Xi}$ obtained from $\Xi$ by removing one knot\footnote{More precisely, the multiplicity of the $\jcero$-th (interior) breakpoint is reduced by one unit.} and $\| \cdot \|$ is a suitable norm in $\SSS$. In particular, we deduce a simple formula for computing such an error and the control points of the spline in $\cc{\SSS}$ attaining the minimum in~\eqref{E:intro}. Notice that the magnitude of the error associated to each knot provides a criterion for deciding which knot is the most convenient to be removed.
Additionally, we also quantify precisely the relationship between the error~\eqref{E:intro} when removing a knot and the jump of the derivative of a suitable order\footnote{The order of this derivative is the order polynomial $p+1$ minus the multiplicity of the knot being considered.} 
of the spline $s$ at such a knot. At this point, it is important to mention that a knot can be \emph{safely} removed from a spline, i.e., leaving it geometrically unchanged, whenever such derivative is continuous at the knot, or equivalently, if the jump of such derivative vanishes. Based on this observation, a criterion involving third derivatives has been already considered for the design of approximation algorithms using cubic splines of maximum smoothness (cf.~\cite{wever1991global,MR1887747}).

Secondly, using the error analysis described above, we propose some algorithms to adaptively remove knots. More specifically, given a tolerance $\Tol >0$ and a spline $s\in\SSS$, the algorithms compute a coarser spline space $\cc{\SSS}\subset \SSS$ and a spline $\cc{s}\in\cc{\SSS}$ satisfying $\|\cc{s} - s\| < \Tol$; where the norm $\|\cdot\|$ considered can be the $L^2$-, the $L^\infty$-, or the $H^1$-norm.

For our presentation we follow some ideas and the notation from~\cite{LM87} and~\cite[Chapter 6]{MorkenPHDthesis}), because the framework used there is adequate for our developments regarding knot removal.

Finally, it is worth mentioning that a data reduction strategy which automatically removes knots has already been presented in~\cite{T.L&K.M}. 
There, the authors assign a weight $w$ to each interior knot of $\Xi$, which encodes a rough measure of the significance of each knot in the representation of the spline $s$. 
These weights approximate certain distance between the spline $s$ and some subspace of $\SSS$. The knots to be removed are then those with the smaller weights.
The proposed algorithm is very interesting and although it removes several knots simultaneously, it involves an internal loop in which the error must be compared to the tolerance after each individual knot removal. 
After removing the knots they compute the best approximation of $s$ as the solution of a global linear least-squares problem, hence this strategy is effective but costly since the main work must be done using all the data. 
In this article, although we remove one knot at a time, the computational cost is negligible because the discrepancy parameter (estimator) depends only on a few data points and the best approximation can be computed by modifying only a small part of the coefficient vector, and furthermore, at each step, only a few estimators and control points have to be recomputed (most of them remain unchanged).

An extension of~\cite{T.L&K.M} to parametric curves and tensor-product B-spline surfaces was presented in~\cite{b_T.L&K.M}. See also the references therein for previous works on knot removal.

This paper is organized as follows. In Section \ref{Intro} we briefly revise the classic theory of spline spaces, including B-spline bases, stability and knot insertion. In Section~\ref{knot removal} we analyze the error for a single knot removal deriving a simple formula for its computation and prove a characterization of such an error in terms of jumps of derivatives. In Section~\ref{S:Algorithms} we use the previous error analysis for defining local coarsening error indicators and develop some efficient and low-cost adaptive knot removal algorithms which guarantee that the error is below a prescribed tolerance. Finally, in Section~\ref{S:Numerical experiments} we explore numerically the performance of the proposed algorithms and illustrate some practical applications.

\section{Review on basics about univariate spline theory} \label{Intro}

Let $[a,b]\subset \RR$ and let 
$Z=\{a=\zeta_1 < \zeta_2 < \dots < \zeta_N = b\}$ be a set of \emph{breakpoints}.
Let $p\in\NN$ be a polynomial degree, which from now on will remain fixed. To each interior breakpoint $\zeta_j$, $j=2,\dots,{N-1}$, we associate a number $m_j$, called multiplicity, such that 
$1\le m_j\le p+1$. Let $\SSS$ be the space of piecewise polynomial functions of degree $\le p$ on $Z$ that have $p-m_j$ continuous derivatives at the breakpoint $\zeta_j$. It is known that $\SSS$ is a vector space of finite dimension $n:=\dim \SSS= p+1+\sum_{j=2}^{N-1} m_j$.      

\subsection{The B-spline bases and their $L^2$-stability}\label{Bases B-splines}

We consider a well known B-spline basis for the spline space $\SSS$,~see e.g.~\cite{DeBoor, Schumi}. In order to define it we need to associate a \emph{knot vector} $\Xi$ which takes into account the polynomial degree $p$, the breakpoints in $Z$ and the corresponding multiplicity, as we explain next.

Let $\Xi:=\{\xi_j\}_{j=1}^{n+p+1}$ be an associated $(p+1)$-basic knot vector, i.e., 
\begin{equation}\label{E:basic knot vector}
    \Xi=\{\xi_1,\dots,\xi_{p+1},\underbrace{\zeta_2,\dots,\zeta_2,}_{m_2 \text{ times}}\dots ,\underbrace{\zeta_{N-1},\dots,\zeta_{N-1},}_{m_{N-1} \text{ times}}\xi_{n+1},\dots,\xi_{n+p+1}\},
\end{equation} 
where $\xi_1 \leq \ldots \leq \xi_{p+1}=\zeta_1$ and $\zeta_N=\xi_{n+1}\leq \ldots\leq \xi_{n+p+1}$. There exists a basis for $\SSS$, called the B-spline basis $\BB:=\{B_{1,p},B_{2,p},\dots,B_{n,p}\}$, where the the $i$-th B-spline $B_i:=B_{i,p}$ is non-negative, uniquely determined by the knots $\{\xi_{i},\dots,\xi_{i+p+1}\}$, and locally supported in $[\xi_{i},\xi_{i+p+1}]$, for $i=1,\dots,n.$ Moreover, they constitute a convex partition of unity in $[a,b]$, i.e.,
 $$\D\sum_{i=1}^n B_i(x) = 1, \quad \forall x \in [a,b].$$

We consider a useful norm in $\SSS$, that we call the $\Xi$-norm, given by  (cf.~\cite{T.L&K.M})
  \begin{equation} \label{norma escalada}
      \| s\|_{\Xi} := 
		\left( \sum_{i=1}^n c_i^2 \frac{\xi_{i+p+1} - \xi_i}{p+1}\right)^{\frac{1}{2}}, 
  \end{equation}
  where ${\bf c} = (c_1,\dots,c_n)^T\in\RR^n$ is the vector of control points of the spline $s\in\SSS$, i.e., $s=\sum_{i=1}^n c_i B_i$. Defining the $n\times n$ diagonal scaling matrix $E_\Xi$ whose elements are
  \begin{equation}\label{E:wi}
      \omega_i :=\left( \frac{\xi_{i+p+1}-\xi_i}{p+1} \right)^\frac{1}{2}, \,\ i=1, \ldots, n,
  \end{equation}
 we have that~\eqref{norma escalada} can be written as
  $ \| s\|_{\Xi}= \| E_\Xi {\bf c} \|_2.$

The following theorem states that the mesh-dependent norm $\| \,\cdot\,\|_{\Xi}$ is equivalent to the standard $L^2[a,b]$-norm in $\SSS$. At this point, it is key to emphasize that the equivalence constant depends on the polynomial degree $p$ but is otherwise independent of the space $\SSS$. A proof of this result can be found in \cite[Theorem 5.2]{DeBOOR1976}, cf. also~\cite[Proposition 5.2]{T.L&K.M}. More recent proofs have been presented in \cite[Theorem 11]{Cetraro} and \cite[Lemma 5]{Cetraro}. 

\begin{theorem}[$L^2$-stability of the B-spline basis] \label{T:estability}\rm
	 There exists a constant $K_p > 0$, which only depends on $p$, such that
	\begin{equation*} \label{estability of B with scaled norm}
		K_p^{-1} \|s\|_\Xi \leq \|s\|_{L^2} \leq \|s\|_\Xi, \qquad \forall s\in \SSS.
	\end{equation*}
 \end{theorem}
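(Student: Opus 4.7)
The plan is to prove the two inequalities separately. For the upper bound $\|s\|_{L^2}\le\|s\|_\Xi$, the strategy is to exploit the non-negativity and partition-of-unity properties of the B-spline basis: since $\sum_i B_i(x)=1$ on $[a,b]$ and $B_i\ge 0$, a pointwise Cauchy--Schwarz estimate yields
\begin{equation*}
s(x)^2=\left(\sum_i c_i\sqrt{B_i(x)}\cdot\sqrt{B_i(x)}\right)^{2}\le\sum_i c_i^2\,B_i(x).
\end{equation*}
Integrating on $[a,b]$ and invoking the classical identity $\int_a^b B_i=(\xi_{i+p+1}-\xi_i)/(p+1)$ reproduces the definition \eqref{norma escalada} of $\|s\|_\Xi^2$ exactly, yielding the upper bound with constant $1$.

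For the lower bound, I would use local dual linear functionals $\lambda_i\colon\SSS\to\RR$ (of de Boor--Fix type) satisfying (i) $\lambda_j(B_i)=\delta_{ij}$ and (ii) $|\lambda_i(s)|\le C_p\,\|s\|_{L^\infty(J_i)}$, with $C_p$ depending only on $p$, where $J_i=[\xi_k,\xi_{k+1}]$ is a chosen non-trivial knot subinterval of $(\xi_i,\xi_{i+p+1})$. Property (i) forces $c_i=\lambda_i(s)$. I then pick $J_i$ to be the longest such subinterval; since $(\xi_i,\xi_{i+p+1})$ contains at most $p+1$ knot intervals, a pigeonhole argument gives $|J_i|\ge(\xi_{i+p+1}-\xi_i)/(p+1)$. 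On $J_i$ the restriction of $s$ is a polynomial of degree at most $p$, so after rescaling to $[0,1]$ the finite-dimensional equivalence of norms on polynomials provides a $p$-dependent constant $M_p$ with
\begin{equation*}
\|s\|_{L^\infty(J_i)}\le M_p\,|J_i|^{-1/2}\,\|s\|_{L^2(J_i)}.
\end{equation*}
Combining these two displays yields $c_i^2\,(\xi_{i+p+1}-\xi_i)/(p+1)\le (C_pM_p)^2\,\|s\|_{L^2(J_i)}^2$. Summing over $i$ and invoking the finite-overlap fact that any fixed knot interval can be chosen as $J_i$ for at most $p+1$ indices (because at most $p+1$ B-splines are supported on it) produces $\|s\|_\Xi^2\le K_p^2\,\|s\|_{L^2}^2$ with $K_p^2=(p+1)(C_pM_p)^2$.

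The main obstacle is the existence of the dual functionals with the $p$-only-dependent bound in (ii); constructing them explicitly requires Marsden's identity, which represents arbitrary polynomials in the B-spline basis via dual polynomials, together with a divided-difference representation for $\lambda_i$, and showing that the resulting constant $C_p$ is uniform over all knot vectors is the technical heart of the argument. Once these functionals are in hand, the rest is bookkeeping with the knot multiplicities (degenerate knot intervals of length zero simply drop out of the sums) and the standard local-support property of B-splines used in the overlap count.
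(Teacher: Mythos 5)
The paper does not actually prove this theorem; it states it and cites \cite{DeBOOR1976}, \cite{T.L&K.M} and \cite{Cetraro} for proofs. Your sketch reproduces the classical argument from those references. The upper bound is complete and correct as written: Jensen/Cauchy--Schwarz against the convex partition of unity gives $s(x)^2\le\sum_i c_i^2 B_i(x)$, and the identity $\int B_i=(\xi_{i+p+1}-\xi_i)/(p+1)$ yields $\|s\|_{L^2}\le\|s\|_\Xi$ with constant $1$ (note only that when $\Xi$ is not open the support of an end B-spline may stick out of $[a,b]$, in which case $\int_a^b B_i\le(\xi_{i+p+1}-\xi_i)/(p+1)$ and the inequality still goes the right way). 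The structure of the lower bound --- dual functionals $\lambda_i$ with $\lambda_i(B_j)=\delta_{ij}$, the pigeonhole choice of a subinterval $J_i$ with $|J_i|\ge(\xi_{i+p+1}-\xi_i)/(p+1)$, the scaled inverse inequality for degree-$p$ polynomials, and the $(p+1)$-fold overlap count --- is the standard and correct bookkeeping.

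However, you should be aware that the one step you defer, the existence of de Boor--Fix functionals satisfying $|\lambda_i(s)|\le C_p\|s\|_{L^\infty(J_i)}$ with $C_p$ \emph{uniform over all knot vectors}, is not a technical footnote but the entire content of the theorem: everything else in your argument is elementary, and without a proof of that uniform bound the lower inequality is not established. Proving it (via Marsden's identity, the divided-difference representation of $\lambda_i$, and a careful estimate of the dual polynomials on the chosen interval) is exactly what \cite[Theorem~5.2]{DeBOOR1976} does. So as a self-contained proof your proposal has a genuine gap at that point; as a reduction of the theorem to the known boundedness of the dual basis, it is correct and is precisely the route taken in the literature the paper points to.
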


\begin{remark}\label{R:estabilidad}
    This result is known as $L^2$-stability of the B-spline basis because it states the equivalence between the $L^2$-norm of a spline and a suitable vector norm of its coordinates in the B-spline basis. Indeed, this result holds for $L^q$-norms, with $1\le q\le \infty$, but we presented Theorem~\ref{T:estability} in this way to simplify the presentation. In particular, the classical $L^\infty$-stability of the B-spline basis reads:
    \begin{equation} \label{estability of Bsplines in Linfty}
		K_p^{-1} \|{\bf c}\|_\infty \leq \|s\|_{L^\infty} \leq \|{\bf c}\|_\infty, \qquad \forall s\in \SSS,
	\end{equation}
 where, as before, ${\bf c}$ denotes the vector of control points of $s$, and $\|{\bf c}\|_\infty = \max_{1\le i \le n}|c_i|$.
\end{remark}

\subsection{Knot insertion} \label{Knot insertion}

In order to analyze precisely the error in the knot removal process, we first briefly recall some facts about knot insertion~\cite{Boehm80}, its reverse operation. The purpose here is just to introduce a notation that is suitable for presenting the error analysis of knot removal in the next section.

Let $\Xi$ be a $(p+1)$-basic knot vector as in~\eqref{E:basic knot vector} and $\SSS$ be the corresponding spline space. Let $\jcero$ be such that $2\le \jcero \le N-1$, so that  $\zeta_\jcero$ is an interior breakpoint in $Z$. Let $\icero:= p+1+\sum_{j=2}^\jcero m_j$ whence, $\xi_\icero=\zeta_\jcero$, $p+2\le \icero\le n$ and $\xi_{\icero-1}\le \xi_\icero <\xi_{\icero+1}$. Let $\cc{\Xi}:=\Xi\setminus\{\xi_\icero\}$, that is,
\begin{equation*}\label{E:knotvectorcoarse}
\cc{\Xi}=\{\cc{\xi}_1,\dots,\cc{\xi}_{n+p}\}=\{\xi_1,\dots,\xi_{\icero-1},\xi_{\icero+1},\dots,\xi_{n+p+1}\}.
\end{equation*}
Therefore, we can regard $\Xi$ as obtained from the $(p+1)$-basic knot vector $\cc{\Xi}$ after \emph{inserting} the knot $\xi_{i_0}$. We denote by $\ell:=m_\jcero-1$ the multiplicity of $\xi_\icero$ in $\cc{\Xi}$. Here, $\ell=0$ means that $\xi_\icero$ is not a knot in $\cc{\Xi}$, and therefore, from $\cc{\Xi}$ to $\Xi$ we have inserted a new breakpoint; whereas if $1\le \ell\le p$ we have increased by $1$ the multiplicity of the knot corresponding to the breakpoint $\zeta_\jcero$.

Let $\cc{\BB}:=\{\cc{B}_1,\cc{B}_2,\dots,\cc{B}_{n-1}\}$ be the B-spline basis associated to $\cc{\Xi}$ and $\cc{\SSS}$ be the spline space spanned by $\cc{\BB}$, so that $\cc{\SSS} \subset \SSS$. Thus, each $\cc{s}\in\cc{\SSS}$ can be expressed by
\begin{equation*}
\cc{s} = \sum_{i=1}^{n-1}\cc{c}_i\cc{B}_i,
\end{equation*}
for some ${\bf \cc{c}}:=(\cc{c}_1,\dots,\cc{c}_{n-1})^T\in\RR^{n-1}$.  Since $\cc{\SSS}\subset \SSS$ there exists ${\bf c}:=(c_1,\dots,c_{n})^T\in\RR^{n}$ such that 
\begin{equation*}
\cc{s} = \sum_{i=1}^{n}c_i B_i.
\end{equation*}
It is well known that the control points of $\cc{s}$ in $\BB$ can be uniquely determined by those in $\cc{\BB}$ through the so called \emph{knot insertion formula}~\cite{Boehm80} as

\begin{equation*}  \label{formula de c}
	c_i= \begin{cases}
		\cc {c_i}, &\text{for} \,\ i=1, \dots ,i_0 -p-2, \\
		\lambda_i \cc{c_i} + (1-\lambda_i) \cc{c}_{i-1}, &\text{for} \,\ i= i_0-p-1, \dots, i_0 -\ell,  \\
		\cc{c}_{i-1}, &\text{for} \,\ i= i_0-\ell+1, \dots, n,
	\end{cases}
\end{equation*}
where
\begin{equation*} 
	\lambda_i= \frac{\xi_\icero - \xi_{i}}{\xi_{i+p+1}-\xi_{i}}, \quad  i= \icero-p-1, \dots, \icero -\ell.
\end{equation*}
Notice that the mapping $\cc{\bf c} \mapsto {\bf c}$ can be expressed in matrix form as 
\begin{equation} \label{global system}
	{\bf c}= A  \cc{\bf c},
\end{equation} 
where the \emph{knot insertion matrix} $A\in\RR^{n\times(n-1)}$ is given by
\begin{equation} \label{Matriz A}
A= \begin{pmatrix}
	I_{\icero -p-2} & 0 & 0\\
	0 & A_{\loc} & 0\\
	0 & 0 & I_{n-\icero+\ell}
\end{pmatrix},
\end{equation}
with the sub-matrix $A_{\loc}\in\RR^{(p+2-\ell)\times(p+1-\ell)}$, hereafter called \emph{local knot insertion matrix}, given by
\begin{equation} \label{Matriz Aloc}
A_{\loc}= \begin{pmatrix}
	\alpha_1 & 0& 0 & \cdots & 0& 0\\
	1-\alpha_2 & \alpha_2 & 0&  \cdots & 0 & 0\\
	0 & 1-\alpha_3 & \alpha_3 &\cdots & 0 & 0\\
	\vdots & \vdots & \vdots& \ddots & \vdots &\vdots\\
	0 & 0 & 0 & \cdots & \alpha_{p-\ell} & 0 \\
	0 & 0 & 0 & \cdots & 1-\alpha_{p+1-\ell} & \alpha_{p+1-\ell} \\
	0 & 0 & 0 & \cdots & 0& 1-\alpha_{p+2-\ell}
\end{pmatrix},
\end{equation}
and 
\begin{equation}\label{E:alpha}
    \D\alpha_j:=\lambda_{\icero-p-2+j}
= \frac{\xi_\icero - \xi_{\icero-p-2+j}}{\xi_{\icero-1+j}-\xi_{\icero-p-2+j}},\qquad j=1, \dots,p+2-\ell.
\end{equation}
Notice that $\{\alpha_j\}_{j=1}^{p+2-\ell}$ is monotonically (non-strictly) decreasing with $\alpha_1 = 1$ and $\alpha_{p+2-\ell} = 0$. Additionally, it is important to notice that the matrix $A_{\loc}$ only depends on $2p+1-\ell$ consecutive knots in $\Xi$, namely
 \begin{equation}\label{E:Xilocstar}
     \Xi^*_{\loc}:=\{\xi_{\icero-p}, \ldots, \xi_{\icero+p-\ell}\}.
 \end{equation}

Throughout the next section, we make use of the following splitting for the control points vectors ${\bf c}$ and $\cc{\bf c}$ that allows us to emphasize the local nature of our developments: 
\begin{equation} \label{c_split}
{\bf c}^T = (\underbrace{c_1, \ldots, c_{\icero-p-2}}_{=:{\bf c}^T_\uno}, \underbrace{c_{\icero-p-1}, \ldots, c_{\icero-\ell}}_{=:{\bf c}^T_\dos}, \underbrace{c_{\icero-\ell+1}, \ldots, c_n}_{=:{\bf c}^T_\tres}),
\end{equation}
and
\begin{equation}\label{hatc_split}
{\cc{\bf c}}^T=(\underbrace{\cc{c}_1, \ldots, \cc{c}_{\icero-p-2}}_{=:{ \cc{\bf c}^T_\uno}}, \underbrace{\cc{c}_{\icero-p-1}, \ldots, \cc{c}_{\icero-\ell-1}}_{=:{ \cc{\bf c}^T_\dos}}, \underbrace{\cc{c}_{\icero-\ell}, \ldots, \cc{c}_{n-1}}_{=:{ \cc{\bf c}^T_\tres}}).
\end{equation}
In particular, notice that~\eqref{global system} means ${\bf c}_\uno = \cc{\bf c}_\uno$, ${\bf c}_\tres = \cc{\bf c}_\tres$ and
\begin{equation*} 
	{\bf c}_\dos= A_{\loc} \cc{\bf c}_\dos.
\end{equation*}

\section{Error analysis for single knot removal} 
\label{knot removal}

Let us consider a $(p+1)$-basic knot vector $\Xi:= \{\xi_1, \dots, \xi_{n+p+1}\}$ associated to a set of breakpoints $Z=\{\zeta_1,\dots,\zeta_N\}$ as explained in the previous section. Let $\SSS$ denote the spline space spanned by the B-spline basis $\BB= \{B_1, \dots, B_n\}$ corresponding to $\Xi$ with $\dim{\SSS}=n$.

Throughout this section we consider a fixed interior breakpoint $\zeta_\jcero$, i.e., $2 \le \jcero \le N-1$. 
The knot removal that we consider, understood as the natural reverse process of single knot insertion, consists of decreasing by one the multiplicity of $\zeta_\jcero$ in the sequence of knots $\Xi$. 
Without loss of generality, we consider the removal of the last knot in $\Xi$ that is equal to $\zeta_{j_0}$, i.e., we remove from $\Xi$ the knot $\xi_{i_0}$, with $\icero:= p+1+\sum_{j=2}^\jcero m_j$, so that $p+2\le \icero\le n$ and $\xi_{\icero-1}\le \xi_\icero =\zeta_\jcero <\xi_{\icero+1}$.

Now, we consider the $(p+1)$-basic knot vector $\cc{\Xi}:= \Xi \setminus \{\xi_\icero\} $ and let $\cc{\SSS}$ be the spline space spanned by the B-spline basis $\cc{\BB}= \{\cc{B}_1, \dots, \cc{B}_{n-1}\}$ corresponding to~$\cc{\Xi}$.

The main purpose of this section is to analyze the error
\begin{equation}\label{E:error}
    \EEE{\jcero}^{\| \cdot \|}(s):=\min_{g\in \cc{\SSS}} \| g-s\|,
\end{equation}
for a given spline $s\in\SSS$, where $\|\cdot\|$ denotes a norm in $\SSS$. A best approximation $\cc{s}\in\cc{\SSS}$ is a spline function attaining the minimum, that is,
\begin{equation}\label{E:best approximation}
\cc{s} := \argmin_{g\in \cc{\SSS}} \| g-s \|.
\end{equation}
Our analysis includes a characterization of the error in~\eqref{E:error} considering the $\Xi$-norm defined in~\eqref{norma escalada} and a derivation of a simple formula for computing it in terms of the neighboring knots of $\xi_{\icero}$ and a few control points of the spline $s$. In addition, we propose an efficient way of computing the best approximation in~\eqref{E:best approximation}.

From now on, the error  $\EEE{\jcero}^{\| \cdot \|_\Xi}(s)$ will be denoted simply by $\EEE{\jcero}(s)$, and with a little abuse of notation, if ${\bf c} :=(c_1,c_2,\dots,c_n)^T\in \RR^n$ denotes the vector of coefficients of $s$ in $\BB$, named \emph{control points}, we denote $\|s\|_\Xi = \|{\bf c}\|_\Xi$. It is then easy to verify that
\begin{equation} \label{E:error matricial}
	\EEE{\jcero}(s)= \min_{{\bf b}\in\RR^{n-1}}\| A{\bf b}-{\bf c}\|_\Xi= \min_{{\bf b}\in\RR^{n-1}}\|E_\Xi( A{\bf b}-{\bf c})\|_2,
\end{equation}
where $E_\Xi$ is the scaled diagonal matrix whose entries are given by~\eqref{E:wi} and $A$ is the knot insertion matrix from $\cc{\Xi}$ to $\Xi$ defined in Section~\ref{Knot insertion}. Moreover, if $\cc{s}$ denotes the best approximation given by~\eqref{E:best approximation}, the vector $(\cc{c}_1,\cc{c}_2\dots, \cc{c}_{n-1})^T\in \RR^{n-1}$ of coefficients of $\cc{s}$ in $\cc{\BB}$ satisfies 
\begin{equation} \label{E:best approximation matricial}
	{\bf \cc{c}}:= \argmin_{{\bf b}\in\RR^{n-1}}\| A{\bf b}-{\bf c}\|_\Xi= \argmin_{{\bf b}\in\RR^{n-1}}\|E_\Xi( A{\bf b}-{\bf c})\|_2.
\end{equation}

We notice that when considering the $L^2$-norm in $\SSS$, the spline $\cc{s}$ in~\eqref{E:best approximation} is given by the $L^2$-projection of $s$ onto $\cc{\SSS}$, denoted by $\Pi_{\cc{\SSS}} s$. The next theorem states that the errors in~\eqref{E:error} are equivalent when considering the $L^2$-norm and the $\Xi$-norm. This result is a particular case from~\cite[Theorem 2.1]{LM87} (see also~\cite[Theorem 6.1]{MorkenPHDthesis}) but we include a proof here for the sake of completeness.

\begin{theorem}\rm \label{T:relation between the minimization problems}
Let $s\in \SSS$. Suppose that $g= \Pi_{\cc{\SSS}} s$ and $\cc{s}$ are the solutions of \eqref{E:best approximation} when we consider $\| \cdot \|=\| \cdot \|_{L^2}$ and $\| \cdot \|=\| \cdot \|_{\Xi}$, respectively, then
	\begin{equation*}
		\|g- s \|_{L^2} \leq \| \cc{s}-s\|_{L^2} \leq K_p \| g-s \|_{L^2},
	\end{equation*}
where $K_p$ is the constant from Theorem~\ref{T:estability}, which depends only on $p$, but is otherwise independent of $\Xi$ and $\cc{\Xi}$.
\end{theorem}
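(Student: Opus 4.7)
The plan is to chain together two inequalities using the fact that each of $g$ and $\cc{s}$ is the minimizer in its respective norm together with the norm-equivalence provided by Theorem~\ref{T:estability}.

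The left-hand inequality is immediate: since $\cc{s}\in\cc{\SSS}$ and $g$ is the $L^2$-best approximation of $s$ from $\cc{\SSS}$, the variational definition of $g$ gives $\|g-s\|_{L^2} \le \|\cc{s}-s\|_{L^2}$. No spline machinery is needed here.

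For the right-hand inequality, I would use a three-step chain. First, note that $\cc{s}-s\in\SSS$ (because $\cc{\SSS}\subset\SSS$), so the upper bound in Theorem~\ref{T:estability} yields $\|\cc{s}-s\|_{L^2}\le\|\cc{s}-s\|_{\Xi}$. Second, invoke the defining property of $\cc{s}$ as the $\Xi$-norm minimizer in~\eqref{E:best approximation}: since $g\in\cc{\SSS}$, we have $\|\cc{s}-s\|_{\Xi}\le\|g-s\|_{\Xi}$. Third, apply the lower bound of Theorem~\ref{T:estability} to $g-s\in\SSS$ to obtain $\|g-s\|_{\Xi}\le K_p\|g-s\|_{L^2}$. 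Concatenating gives
\begin{equation*}
\|\cc{s}-s\|_{L^2}\le \|\cc{s}-s\|_{\Xi}\le \|g-s\|_{\Xi}\le K_p\|g-s\|_{L^2},
\end{equation*}
which is the desired estimate.

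There is essentially no obstacle: the argument is a direct combination of the optimality of the two minimizers and the two-sided equivalence between $\|\cdot\|_{L^2}$ and $\|\cdot\|_{\Xi}$ on $\SSS$. The only point worth double-checking is that the relevant differences actually lie in $\SSS$ (so that Theorem~\ref{T:estability} applies with the constant $K_p$ from the space $\SSS$, not from $\cc{\SSS}$), which follows simply from $\cc{\SSS}\subset\SSS$. Importantly, the chain yields the constant $K_p$ that depends only on $p$, as claimed.
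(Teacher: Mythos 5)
Your proposal is correct and follows essentially the same argument as the paper: both combine the $L^2$-optimality of $g$, the $\Xi$-optimality of $\cc{s}$, and the two-sided equivalence of Theorem~\ref{T:estability} into the same four-inequality chain. The paper merely phrases the $\Xi$-norm steps in terms of the control-point vectors $A\cc{\bf c}-{\bf c}$ and $A{\bf d}-{\bf c}$, which is just the definition of $\|\cdot\|_\Xi$ on $\SSS$ made explicit.
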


\begin{proof}
Let ${\bf c}=(c_i)_{i=1}^n$ denote the control points of an arbitrary $s \in \SSS$ so that $s=\sum_{i=1}^n c_i B_i$. Let $g=\Pi_{\cc{\SSS}} s=\sum_{i=1}^{n-1} d_i \cc{B}_i \in \cc{\SSS}$ and  $\cc{s} = \sum_{i=1}^{n-1} \cc{c}_i \cc{B}_i \in \cc{\SSS}$  be the solutions of \eqref{E:best approximation} considering the $L^2$- and the $\Xi$-norms, respectively. Therefore, if ${\bf d}=(d_i)_{i=1}^{n-1}$ and ${\bf \cc{c}}=(\cc{c}_i)_{i=1}^{n-1}$, we have
\[
\|g-s\|_{L^2} \le \|  \cc{s} -s\|_{L^2}
\qquad\text{and}\qquad
\| A{\bf \cc{c}}-{\bf c}\|_{\Xi} \le \|A{\bf {d}}-{\bf c}\|_{\Xi},
\]
with $A$ the knot insertion matrix of $\cc{\Xi}$ on $\Xi$. Also, $\cc{s}= \sum_{i=1}^n (A {\bf \cc{c}})_i B_i$ and $g= \sum_{i=1}^n (A {\bf d})_i B_i$, whence Theorem~\ref{T:estability} yields
\[
\| \cc{s}-s \|_{L^2}
\le \| A{\bf \cc{c}}-{\bf c}\|_{\Xi} 
\qquad\text{and}\qquad 
\|A{\bf {d}}-{\bf c}\|_{\Xi} \le K_p \|  g-s\|_{L^2}.
\]
The proof concludes taking into account these four inequalities. 
\end{proof}

In view of the last result, although we are interested in the $L^2$-norm, we will focus on working with the $\Xi$-norm because it has the advantage of being localized in the sense that the computation of~\eqref{E:error matricial} and~\eqref{E:best approximation matricial} involve only a small part of the data as we will see in the next two subsections.

\subsection{Computation of the best approximation in the $\xi$-norm} \label{calculo de la mejor aprox}

Our next goal is a characterization of the solution ${\bf \cc{c}}$ of~\eqref{E:best approximation matricial} which allows its computation without solving a global system. The results of this section have been briefly introduced in~\cite[Example 4.1]{LM87}. In order to make our presentation clearer, we expand them here and state them more precisely using the notation followed in this article.

Recall that $A$ is the knot insertion matrix from $\cc{\Xi}$ to $\Xi$ defined in Section~\ref{Knot insertion} and that $\ell+1$ denotes the multiplicity of the knot $\xi_{i_0}$ in the knot vector $\Xi$.
  Notice that
$$ \|A{\bf b}-{\bf c}\|_{\Xi}=\|E_\Xi (A{\bf b}-{\bf c})\|_{2}= \|E_\Xi A{\bf b}-E_\Xi{\bf c} \|_{2}= \|B {\bf b}- {\bf{d}}  \|_2$$
where $B:=E_\Xi A \in \RR^{n\times (n-1)}$ and ${\bf{d}}:= E_\Xi {\bf c} \in \RR^n$. Thus, problem~\eqref{E:best approximation matricial} has a unique solution $\bf \cc{c}$, which is the least squares solution of the system 
\begin{equation*} \label{E:least squares system}
	B {\bf \cc{c}}= {\bf d}.
\end{equation*}
In order to write the matrix $B$ in blocks, we first remark that matrix $E_\Xi$ can be expressed as 
\begin{equation}\label{E:Matrix E}
    E_\Xi=  \begin{pmatrix}
E_{\uno} &0 & 0\\
0 & E_{\loc} & 0\\
0 &0 & E_{\tres}
\end{pmatrix}
\end{equation}
where $E_\uno, E_\loc$ and $E_\tres$ are diagonal matrices, whose main diagonals are 
\begin{align*}
{\bf e}_\uno &= (\omega_1, \ldots, \omega_{\icero-p-2}) \in \RR^{\icero-p-2},\\
{\bf e}_\loc &= (\omega_{\icero-p-1}, \ldots, \omega_{\icero-\ell}) \in \RR^{p+2-\ell},\\
{\bf e}_\tres &= (\omega_{\icero+1-\ell}, \ldots, \omega_{n}) \in \RR^{n-\icero+\ell},
\end{align*}
with $\omega_j$ given in~\eqref{E:wi}. Notice that $E_{\loc}\in\RR^{(p+2-\ell)\times(p+2-\ell)}$ and defining 
\begin{equation}\label{ej}
    e_j :=\omega_{\icero-p-2+j},\qquad \text{for }j=1,\dots,p+2-\ell,
\end{equation}
we have that ${\bf e}_\loc = (e_1,\dots,,e_{p+2-\ell})$.  
Now, taking into account~\eqref{Matriz A} and that $B=E_\Xi A$ we have that
\begin{equation}\label{E:Matrix B}
    B=  \begin{pmatrix}
E_{\uno} &0 & 0\\
0 & B_{\loc} & 0\\
0 &0 & E_{\tres}
\end{pmatrix},
\end{equation}
with $B_{\loc}:= E_{\loc}A_{\loc}$ and $A_{\loc}$ as in~\eqref{Matriz Aloc}.
It is worth noticing that the matrix $B_\loc$ only depends on $2p+3-\ell$ consecutive knots in $\Xi$, namely
 \begin{equation}\label{E:Xiloc}
     \Xi_{\loc}:=\{\xi_{\icero-p-1}, \ldots, \xi_{\icero+p+1-\ell}\},
 \end{equation}
 because the matrix $A_\loc$ depends on $\Xi_{\loc}^*$ in~\eqref{E:Xilocstar} and $E_\loc$ depends on $\Xi_{\loc}$.

In the next result we establish a formula to compute the solution ${\bf \cc{c}}$ of~\eqref{E:best approximation matricial} and a characterization of the error in~\eqref{E:error matricial}, which although it measures the $\Xi$-distance of $s$ to the whole space $\cc{\SSS}$, can be computed using only \emph{local} information.

\begin{theorem} \rm \label{Teo del sistema y error local}

 Let $s\in\SSS$ and let $\EEE{\jcero}(s)$ be defined by~\eqref{E:error} using the $\Xi$-norm in $\SSS$. Let ${\bf c} =(c_1,c_2,\dots,c_n)^T\in \RR^n$ be the vector of control points of $s$, i.e., $s=\D\sum_{i=1}^n c_i B_i$. Let $\cc{s}\in\cc{\SSS}$ be the best approximation of $s$ in $\cc{\SSS}$ in the $\Xi$-norm defined in~\eqref{E:best approximation}, and let $\cc{\bf c} :=(\cc{c}_1,\cc{c}_2\dots, \cc{c}_{n-1})^T\in \RR^{n-1}$ be the vector of control points of $\cc{s}$, i.e., $\cc{s}= \D\sum_{i=1}^{n-1} \cc{c}_i \cc{B}_{i}$. Let us consider the splitting of ${\bf c}$ and $\cc{\bf c}$ given in~\eqref{c_split} and~\eqref{hatc_split}, respectively. Then, there hold the following assertions:
 \begin{enumerate}[(i)]
     \item $\cc{\bf c}_\uno = {\bf c}_\uno$, $\cc{\bf c}_\tres = {\bf c}_\tres$ and $\cc{\bf c}_\dos\in\RR^{p+1-\ell}$ is the least squares solution of the system
\begin{equation*} 
	B_{\loc} \cc{\bf c}_\dos = E_\loc{\bf c}_\dos.
 \end{equation*}
 \item The error $\EEE{\jcero}(s)$ satisfies
 
 \begin{equation*}
     \EEE{\jcero}(s) = \|B_{\loc} \cc{\bf c}_\dos - E_\loc{\bf c}_\dos\|_2.
 \end{equation*}
 \end{enumerate}
\end{theorem}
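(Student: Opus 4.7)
The plan is to exploit the block-diagonal structure of $B = E_\Xi A$ exhibited in~\eqref{E:Matrix B}, which decouples the global least squares problem~\eqref{E:best approximation matricial} into three independent subproblems, two of which admit exact solutions.

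Setting ${\bf d} := E_\Xi {\bf c}$ and using the splittings~\eqref{E:Matrix E} and~\eqref{c_split}, I would first observe that ${\bf d}_\uno = E_\uno {\bf c}_\uno$, ${\bf d}_\dos = E_\loc {\bf c}_\dos$ and ${\bf d}_\tres = E_\tres {\bf c}_\tres$. Splitting an arbitrary ${\bf b}\in\RR^{n-1}$ according to~\eqref{hatc_split} as $({\bf b}_\uno^T, {\bf b}_\dos^T, {\bf b}_\tres^T)^T$, the block form of $B$ immediately gives
\begin{equation*}
\|B{\bf b} - {\bf d}\|_2^2 = \|E_\uno({\bf b}_\uno - {\bf c}_\uno)\|_2^2 + \|B_\loc {\bf b}_\dos - E_\loc {\bf c}_\dos\|_2^2 + \|E_\tres({\bf b}_\tres - {\bf c}_\tres)\|_2^2.
\end{equation*}
Since the three summands depend on disjoint groups of unknowns, the minimum of the sum equals the sum of the three independent minima.

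Next I would note that $E_\uno$ and $E_\tres$ are diagonal with strictly positive diagonal entries (the weights $\omega_i$ in~\eqref{E:wi} are positive on the outer blocks, since $\xi_{i+p+1} > \xi_i$ there because knots have multiplicity at most $p+1$). Hence the first and third summands vanish at the unique choices ${\bf b}_\uno = {\bf c}_\uno$ and ${\bf b}_\tres = {\bf c}_\tres$, yielding $\cc{\bf c}_\uno = {\bf c}_\uno$ and $\cc{\bf c}_\tres = {\bf c}_\tres$. The middle summand is then minimized, by definition, by the least squares solution $\cc{\bf c}_\dos$ of $B_\loc {\bf b}_\dos = E_\loc {\bf c}_\dos$, which proves (i). Because the optimal residuals in the outer blocks are zero, the expression $\EEE{\jcero}(s)^2 = \|B\cc{\bf c} - {\bf d}\|_2^2$ collapses to $\|B_\loc \cc{\bf c}_\dos - E_\loc {\bf c}_\dos\|_2^2$, which is precisely (ii). I do not anticipate any genuine obstacle here: once the block factorization of $\|B{\bf b} - {\bf d}\|_2^2$ is written down, the argument is a purely algebraic re-reading of~\eqref{E:error matricial} and~\eqref{E:best approximation matricial} through the structure~\eqref{E:Matrix B}; the only point worth a sentence of justification is the positivity of the diagonal entries of $E_\uno$ and $E_\tres$.
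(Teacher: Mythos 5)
Your proposal is correct and rests on the same idea as the paper's proof, namely the block structure of $B=E_\Xi A$ in~\eqref{E:Matrix B}, which decouples the problem into the three blocks of~\eqref{c_split} and~\eqref{hatc_split}. The only cosmetic difference is that the paper derives (i) from the normal equations $B^TB\cc{\bf c}=B^TE_\Xi{\bf c}$ written in block form, whereas you split the squared objective directly over disjoint groups of unknowns; both arguments are equivalent and your observation about the positivity of the diagonal entries of $E_\uno$ and $E_\tres$ is the right point to justify.
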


\begin{proof} 
Taking into account~\eqref{E:Matrix B} and~\eqref{E:Matrix E} we have that
$$
			B^T B =
\begin{pmatrix}
				E_{\uno}^2 & 0 & 0\\
				0 & B_{\loc}^T B_\loc & 0\\
				0 & 0 & E_{\tres}^2
			\end{pmatrix} ,
   \quad\text{and}\quad
 B^TE_\Xi  =
   \begin{pmatrix}
				E_{\uno}^2 & 0 & 0\\
				0 & B_{\loc}^T E_\loc& 0\\
				0 & 0 & E_{\tres}^2
			\end{pmatrix}.
		$$
Since $B^TB \cc{\bf c}= B^TE_\Xi{\bf c}$ we have that
$$\begin{cases}
			E_{\uno}^2 \cc{{\bf c}}_{\uno}= E_{\uno}^2 {\bf c}_{\uno} \\[5pt]
			B_{\loc}^T B_\loc\cc{{\bf c}}_\dos= B_{\loc}^T E_\loc {\bf c}_\dos, \\[5pt]
			E_{\tres}^2 \cc{{\bf c}}_{\tres}= E_{\tres}^2 {\bf c}_{\tres} 
		\end{cases}\quad\text{i.e.}\quad \begin{cases}
			\cc{{\bf c}}_{\uno}= {\bf c}_{\uno} \\[5pt]
			B_{\loc}^T B_\loc\cc{{\bf c}}_\dos= B_{\loc}^T E_\loc {\bf c}_\dos ,
    \\[5pt]
			\cc{{\bf c}}_{\tres}=  {\bf c}_{\tres}
		\end{cases}$$
which implies the first assertion of the theorem.

Additionally, we have that
		$$B {\bf \cc{{\bf c}}}- E_\Xi {\bf c} = \begin{pmatrix}
			E_{\uno}\cc{{\bf c}}_{\uno} \\[5pt]
			B_{\loc} \cc{{\bf c}}_\dos \\[5pt]
			E_{\tres} \cc{{\bf c}}_{\tres} 
		\end{pmatrix} - 
		\begin{pmatrix}
			E_{\uno} {\bf c}_{\uno} \\[5pt]
			E_{\loc} {\bf c}_{\dos} \\[5pt]
			E_{\tres} {\bf c}_{\tres} 
		\end{pmatrix} =
		\begin{pmatrix}
			0 \\[5pt]
			B_{\loc} \cc{{\bf c}}_\loc - E_{\loc} {\bf c}_\loc \\[5pt]
			0
		\end{pmatrix}, $$
		and therefore,
		$$\EEE{\jcero}(s) = \| B {\bf \cc{c}} - E_\Xi {\bf c} \|_2= \| B_{\loc} \cc{{\bf c}}_{\dos} - E_{\loc} {\bf c}_\dos \|_2,$$
  which is the second assertion.
\end{proof}

\begin{remark}\label{R:local problem}
	Since $B_{\loc}= E_{\loc}A_{\loc}$, the assertions in Theorem~\ref{Teo del sistema y error local} can be stated as
 \begin{equation*}
     \EEE{\jcero}(s) =\min_{{\bf x}\in\RR^{p+1-\ell}}
  \|E_{\loc}(A_\loc {\bf x} - {\bf c}_\dos)\|_2,
 \end{equation*} 
  and
	$$ \cc{\bf c}_\dos =\argmin_{{\bf x}\in\RR^{p+1-\ell}}
  \|E_{\loc}(A_\loc {\bf x} - {\bf c}_\dos)\|_2.$$
\end{remark}

\subsection{A local formula for the error in $\xi$-norm}

In this section we find a simple formula for computing the error $\EEE{\jcero}(s)$ in terms of $\Xi_{\loc}$ defined in~\eqref{E:Xiloc} and ${\bf c}_\dos=(c_{\icero-p-1}, \ldots, c_{\icero-\ell})^T$.

We need the following result, whose proof follows from elementary linear algebra.

\begin{lemma}\label{L:QR}
\rm Let $M\in\RR^{(k+1)\times k}$ be a matrix with linear independent columns. If ${\bf q}\in\RR^{k+1}$ satisfies $\|{\bf q}\|_2=1$ and ${\bf q}^TM = 0$, then
$$\min_{{\bf x}\in\RR^{k}}
  \|M {\bf x} - {\bf y}\|_2=|{\bf q}^T{\bf y}|,$$
  for all ${\bf y}\in\RR^{k+1}$.
\end{lemma}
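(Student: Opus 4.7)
My plan is to exploit the orthogonal decomposition of $\RR^{k+1}$ induced by the column space of $M$ and the vector $\mathbf{q}$. Since $M\in\RR^{(k+1)\times k}$ has linearly independent columns, its column space $\col(M)$ is a $k$-dimensional subspace of $\RR^{k+1}$, so its orthogonal complement $\col(M)^\perp$ is one-dimensional. The hypothesis $\mathbf{q}^T M = 0$ with $\|\mathbf{q}\|_2 = 1$ says precisely that $\mathbf{q}$ is a unit vector spanning $\col(M)^\perp$.

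Given this, I would write any $\mathbf{y}\in\RR^{k+1}$ as the orthogonal sum $\mathbf{y}=\mathbf{y}_\parallel + (\mathbf{q}^T\mathbf{y})\,\mathbf{q}$ with $\mathbf{y}_\parallel\in\col(M)$, so that there exists $\mathbf{x}_\parallel\in\RR^k$ with $M\mathbf{x}_\parallel = \mathbf{y}_\parallel$. Then, for any $\mathbf{x}\in\RR^k$, the vector $M\mathbf{x}-\mathbf{y}_\parallel$ lies in $\col(M)$ and is therefore orthogonal to $\mathbf{q}$; a direct application of the Pythagorean identity gives
\begin{equation*}
\|M\mathbf{x}-\mathbf{y}\|_2^2 = \|M\mathbf{x}-\mathbf{y}_\parallel\|_2^2 + |\mathbf{q}^T\mathbf{y}|^2 \,\|\mathbf{q}\|_2^2 \;\ge\; |\mathbf{q}^T\mathbf{y}|^2,
\end{equation*}
with equality attained by the choice $\mathbf{x}=\mathbf{x}_\parallel$. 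Taking square roots yields the claimed formula.

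This is essentially a routine exercise in finite-dimensional linear algebra and I do not anticipate any serious obstacle; the only point worth being careful about is the uniqueness (up to sign) of $\mathbf{q}$, which is guaranteed by the dimension count $\dim\col(M)^\perp = 1$ together with the normalization $\|\mathbf{q}\|_2 = 1$. The absolute value in $|\mathbf{q}^T\mathbf{y}|$ reflects exactly this sign ambiguity, which is why the statement is independent of the chosen orientation of $\mathbf{q}$. An alternative, equally short route would be to invoke the normal equations $M^T M\cc{\mathbf{x}} = M^T\mathbf{y}$ to identify the optimal residual $\mathbf{r}=M\cc{\mathbf{x}}-\mathbf{y}$ as a vector in $\col(M)^\perp$, hence a scalar multiple of $\mathbf{q}$, and then compute $\mathbf{q}^T\mathbf{r} = -\mathbf{q}^T\mathbf{y}$; but the projection argument above is more direct.
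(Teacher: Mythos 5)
Your proof is correct and follows essentially the same route as the paper: both identify $\mathbf{q}$ as a unit spanning vector of the one-dimensional orthogonal complement of the column space of $M$ and recognize the optimal residual as the orthogonal projection of $\mathbf{y}$ onto that line. Your explicit use of the Pythagorean identity is a minor stylistic difference, not a different argument.
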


\begin{proof} 
Since $M$ has $k$ linearly independent columns, the column space $C(M)$ of $M$ has dimension $k$ and its orthogonal complement $C(M)^\perp$ has dimension one. 
Therefore, if ${\bf q}\in\RR^{k+1}$ satisfies $\|{\bf q}\|_2=1$ and ${\bf q}^TM = 0$ we have that $C(M)^\perp=\Span\{{\bf q}\}$.

 Given ${\bf y}\in\RR^{k+1}$, the minimum of $\|M {\bf x} - {\bf y}\|_2$ is achieved when $M{\bf x}-{\bf y}$ is orthogonal to $C(M)$, whence ${\bf y}-M{\bf x}$ is  the orthogonal projection of ${\bf y}$ onto $C(M)^\perp$. Therefore 
 \begin{equation*}
 \min_{{\bf x}\in\RR^{k}}
  \|M {\bf x} - {\bf y}\|_2
  =\|({\bf q}^T{\bf y}){\bf q}\|_2
  = |{\bf q}^T{\bf y}| \, \|{\bf q}\|_2 
  = |{\bf q}^T{\bf y} |.
\end{equation*}
\end{proof}

Taking into account Remark~\ref{R:local problem} and the last lemma, we are now in position to establish a formula for computing the error.

\begin{theorem}[Main result I]\label{T:error formula}
\rm
Let $\Mu = (\mu_1,\dots,\mu_{p+1-\ell})^T\in\RR^{p+1-\ell}$ be defined by
\begin{equation}\label{E:mu}
    \mu_{j-1}:=\frac{1-\alpha_j}{\alpha_{j-1}},\qquad j=2,\dots,p+2-\ell,
\end{equation}
where $\{\alpha_j\}_{j=1}^{p+2-\ell}$ is the set of values defining the local knot insertion matrix given by~\eqref{E:alpha}. Let $\rr:=(r_1,\dots,r_{p+2-\ell})^T\in\RR^{p+2-\ell}$ be defined by $r_{p+2-\ell}=\gamma_\loc e_{p+2-\ell}$ and
\begin{equation} \label{r: recurrence formula}
    r_{j-1}:=-\mu_{j-1}r_j,\qquad j=2,\dots,p+2-\ell,
\end{equation} 
where $\gamma_\loc:= \left( 1+ e_{p+2-\ell} \sum _{j=1}^{p+1-\ell} \frac{1}{e_j^2} \prod_{i=j}^{p+1-\ell} \mu_i^2 \right)^{-\frac{1}{2}}$, and $e_j$ is given in~\eqref{ej}.
Then, for all $s\in\SSS$,
$$\EEE{\jcero}(s) = |\rr^T {\bf c}_\dos|,$$
where $s=\sum_{i=1}^n c_i B_i$ and ${\bf c}_\dos=(c_{\icero-p-1}, \ldots, c_{\icero-\ell})^T$.
\end{theorem}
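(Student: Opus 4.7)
The plan is to apply Lemma~\ref{L:QR} to the localized minimization identified in Remark~\ref{R:local problem}. Writing $M := E_\loc A_\loc \in \RR^{(p+2-\ell)\times(p+1-\ell)}$ and ${\bf y} := E_\loc {\bf c}_\dos$, we have $\EEE{\jcero}(s) = \min_{\bf x}\|M{\bf x} - {\bf y}\|_2$. The matrix $M$ has linearly independent columns because $A_\loc$ is lower bidiagonal with strictly positive pivots $\alpha_1 = 1$ and $\alpha_j > 0$ for $j = 2,\ldots,p+1-\ell$ (the numerators $\xi_\icero - \xi_{\icero-p-2+j}$ are strictly positive for these indices, since the corresponding knots lie strictly to the left of $\xi_\icero$) and $E_\loc$ is a positive diagonal matrix. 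Thus the lemma applies as soon as I exhibit a unit vector ${\bf q}\in\RR^{p+2-\ell}$ in the left null-space of $M$.

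The crucial simplification is the change of variable ${\bf q} = E_\loc^{-1}\rr$, i.e.\ $q_j = r_j/e_j$. Since $E_\loc$ is diagonal and invertible, the weighted orthogonality condition collapses to ${\bf q}^T E_\loc A_\loc = \rr^T A_\loc = 0$. Reading this column by column from the bidiagonal structure~\eqref{Matriz Aloc} yields, for each $j = 1,\ldots,p+1-\ell$, the scalar equation $\alpha_j r_j + (1-\alpha_{j+1}) r_{j+1} = 0$, which after dividing by $\alpha_j > 0$ and a trivial index shift reproduces exactly~\eqref{r: recurrence formula} with $\mu_{j-1}$ as in~\eqref{E:mu}. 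The recurrence determines $\rr$ up to a global multiplicative constant; solving from the bottom upwards gives $r_j = r_{p+2-\ell}(-1)^{p+2-\ell-j}\prod_{i=j}^{p+1-\ell}\mu_i$ for $j \le p+1-\ell$.

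It remains to fix this scalar by imposing $\|{\bf q}\|_2^2 = \sum_{j=1}^{p+2-\ell} r_j^2/e_j^2 = 1$. Substituting the explicit expression for $r_j$ factors out $r_{p+2-\ell}^2$ and, after algebraic rearrangement, produces the normalization constant $\gamma_\loc$ displayed in the statement, so that $r_{p+2-\ell} = \gamma_\loc e_{p+2-\ell}$ (the sign is immaterial, since Lemma~\ref{L:QR} returns an absolute value). Finally, applying the lemma and unfolding the change of variable gives
$$\EEE{\jcero}(s) = |{\bf q}^T {\bf y}| = |(E_\loc^{-1}\rr)^T (E_\loc {\bf c}_\dos)| = |\rr^T {\bf c}_\dos|,$$
which is the claimed formula.

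The argument is entirely algebraic and presents no serious obstacle; the main idea is the change of variable that decouples the diagonal weights $E_\loc$ from the orthogonality condition, leaving a clean two-term recurrence in the knot-insertion ratios $\alpha_j$ and confining the dependence on the $e_j$'s to the single normalization scalar $\gamma_\loc$.
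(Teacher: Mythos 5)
Your proposal is correct and follows essentially the same route as the paper's own proof: localize via Remark~\ref{R:local problem}, set ${\bf q}=E_\loc^{-1}\rr$ so that the orthogonality condition ${\bf q}^T E_\loc A_\loc=0$ reduces to $\rr^T A_\loc=0$ (equivalently the two-term recurrence~\eqref{r: recurrence formula}), normalize via $\gamma_\loc$, and invoke Lemma~\ref{L:QR}. Your explicit verification that $E_\loc A_\loc$ has linearly independent columns is a small addition the paper leaves implicit, but otherwise the arguments coincide.
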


\begin{remark}\label{R:calculo del error}
   We recall that $\EEE{\jcero}(s)$ denotes the error of the best approximation of a spline $s\in\SSS$ when reducing by one unit the multiplicity of the $j_0$-th breakpoint. This theorem shows that the error $\EEE{\jcero}:\SSS\to\RR_+$ can be characterized by the vector $\rr\in\RR^{p+2-\ell}$, which can be easily computed from the $2p+3-\ell$ consecutive knots collected in $\Xi_{\loc}=\{\xi_{\icero-p-1}, \ldots, \xi_{\icero+p+1-\ell}\}$.
\end{remark}

The vector $\rr$ proposed in the statement of this theorem is obtained by setting a value for the last component $r_{p+2-\ell}$ and performing backward substitution on the upper triangular bidiagonal matrix $A_\loc^T$. 

\begin{proof}
From Remark~\ref{R:local problem} we have that
$$\EEE{\jcero}(s)=\min_{{\bf x}\in \RR^{p+1-\ell}} \| E_\loc A_\loc {\bf x} - E_\loc {\bf c}_\loc \|_2.$$
We will apply Lemma \ref{L:QR} with $M=E_\loc A_\loc$ and ${\bf y} =E_\loc{\bf c}_\loc$. Let ${\bf q}_\loc:= E_\loc^{-1} {\bf r}_\loc$ and let us prove that ${\bf q}_\loc$ is orthogonal to the columns of the matrix $M$, i.e.
${\bf q}_\loc^T E_\loc A_\loc=0$, or equivalently, that
$${\bf r}_\loc^T A_\loc=0.$$
Indeed, given $j\in \{ 1, \ldots, p+1-\ell\}$, taking into account~\eqref{Matriz Aloc}, we have that
$$ ({\bf r}_\loc^T A_\loc)_j= {\bf r}_\loc^T \col_j (A_\loc)= \alpha_j r_j + (1-\alpha_{j+1}) r_{j+1},$$
and since $r_j= -\mu_j r_{j+1}= - \frac{1-\alpha_{j+1}}{\alpha_j}r_{j+1}$,
we have
$$({\bf r}_\loc^T A_\loc)_j=-\alpha_j \frac{1-\alpha_{j+1}}{\alpha_j}r_{j+1} + (1-\alpha_{j+1})r_{j+1} = -(1-\alpha_{j+1})r_{j+1} + (1-\alpha_{j+1})r_{j+1} =0.$$ 

Since the definition of $\gamma_\loc$ guarantees that $\|{\bf q}_\loc \|_2=1$, we can finally apply Lemma~\ref{L:QR} to obtain
$$\EEE{\jcero}(s)=| {\bf q}_\loc^T {\bf y} |=| (E_\loc^{-1}{\bf r}_\loc)^T (E_\loc {\bf c}_\loc) |= |{\bf r}_\loc^T {\bf c}_\loc |,$$
which concludes the proof.
\end{proof}

\begin{remark}\label{R:Other indicators}
 Notice that $\| \cdot \|_\cpnorm : \SSS \to \RR$, defined by
  $$ \| s\|_{\cpnorm}= \| {\bf c} \|_2= \Big( \sum_{i=1}^n c_i^2 \Big)^{\frac{1}{2}},$$
  is a norm in $\SSS$. Following the same argument from the last section we have that
 \begin{equation*}
     \EEE{\jcero}^{\| \cdot \|_{\cpnorm}}(s) =\min_{{\bf x}\in\RR^{p+1-\ell}}
  \|A_\loc {\bf x} - {\bf c}_\dos\|_2.
 \end{equation*}
  Moreover, the same steps of the proof of Theorem~\ref{T:error formula} imply that this error can be computed by
  \begin{equation}\label{E:error en norma sin peso}
     \EEE{\jcero}^{\| \cdot \|_{\cpnorm}}(s) =|\tilde{\bf r}^T_\loc {\bf c}_\loc|,
 \end{equation}
with the last component of $\tilde{\bf r}_\loc \in \RR^{p+2-\ell}$ given by~$\tilde{r}_{p+2-\ell}:=  \tilde {\gamma}_\loc:= \left( 1+  \sum _{j=1}^{p+1-\ell}  \prod_{i=j}^{p+1-\ell} \mu_i^2 \right)^{-\frac{1}{2}}$, and the remaining components given by the recurrence formula \eqref{r: recurrence formula}. 
On the other hand, we remark that the value $D = D_{\Xi,j_0}(s)$ introduced in \cite[Equation 24]{ECK} satisfies $|D|=|{\bf d}_\loc^T{\bf c}_\loc|$, where the last component of the vector ${\bf d}_\loc \in \RR^{p+2-\ell}$ is set as~${d}_{p+2-\ell}:=1,$ and the remaining components are given by the recurrence formula \eqref{r: recurrence formula}. It is worth noting that the three quantities $\EEE{\jcero}(s)$, $\EEE{\jcero}^{\| \cdot \|_{\cpnorm}}(s)$, and $|D|$, are the absolute value of a linear functional from $\SSS$ into $\RR$ that vanishes on $\cc{\SSS}$, the dimension of which is one smaller than that of $\SSS$. This explains why they can all be computed as the scalar product with vectors $\bf r$, $\bf \tilde r$, $\bf d$, respectively, which are parallel.

In particular, 
\begin{equation*}
    \delta_\loc\EEE{\jcero}^{\| \cdot \|_{\cpnorm}}(s)=  \EEE{\jcero}(s)= \beta_\loc |D_{\Xi,j_0}(s)|, \qquad\forall\, s\in\SSS,
\end{equation*}
with $ \delta_\loc= \frac{r_{p+2-\ell}}{\tilde{r}_{p+2-\ell}}$ and $\beta_\loc= r_{p+2-\ell}$.

It is also worth noticing that even though these three indicators are multiples of each other, the factors relating them depend on the knots in $\Xi_\loc$ and therefore, such a factors are expected to be different for each breakpoint $\zeta_{\jcero}$. In Section~\ref{S:Numerical experiments} we compare the behavior of coarsening algorithms based on these three estimators. We emphasize here that even though these three quantities vanish if and only if a knot can be safely removed without modifying the spline $s$, the quantities $\EEE{\jcero}(s)$ and $\EEE{\jcero}^{\| \cdot \|_{\cpnorm}}(s)$ also have information about some notion of the error that will be produced by such a knot removal, whereas the quantity $D$ does not.

\end{remark}

\subsection{On the jump of derivatives of spline functions} \label{Saltos de una spline}

In this section we deduce a formula for the jump of the derivatives of a spline function $s$ at the breakpoint $\zeta_\jcero$, which we relate, in the next section, with the error analyzed above.

The jump $\JJJ_{\xi}$ of a piecewise continuous function $s$ is defined by
\begin{equation*}
\JJJ_{\xi}(s) := \lim_{x\to\xi^+} s(x)-  \lim_{x\to\xi^-} s(x).
\end{equation*}

The next result can be found in~\cite[Lemma 3.21]{L&M_draft}; we include a proof here for the sake of completeness.

\begin{lemma} \label{Lemma Jumps for B splines}\rm
	Let $\Xi=\{\xi_j\}_{j=1}^{n+p+1}$ be a $(p+1)$-basic knot vector and let $\BB=\{B_1,B_2,\dots,B_n\}$ be the B-spline basis of degree $p$ associated to $\Xi$. Let $j$ be fixed such that $1\le j\le n$. Let $\xi$ be a knot from  $\xi_j, \ldots, \xi_{j+p+1}$, and let $m$ be its multiplicity among the knots $\xi_j, \ldots, \xi_{j+p+1}$. Then the $(p-m+1)$-th derivative of $B_j$ has a nonzero jump at $\xi$ given by
	\begin{equation} \label{salto en L&M}
		\JJJ_{\xi} \left( D^{p-m+1} B_{j} \right)= \frac{p!}{(m-1)!} \frac{(\xi_{j+p+1}-\xi_j)}{\D \prod_{k=j, \xi_k  \neq \xi}^{j+p+1}(\xi_k - \xi)} .
	\end{equation}
\end{lemma}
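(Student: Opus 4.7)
The plan is to proceed by induction on the degree $p$, exploiting the well-known derivative recursion
$$D B_{j,p}(x) = \frac{p}{\xi_{j+p}-\xi_j}\,B_{j,p-1}(x) - \frac{p}{\xi_{j+p+1}-\xi_{j+1}}\,B_{j+1,p-1}(x).$$
The base case $p=0$ forces $m=1$; since $B_{j,0}=\chi_{[\xi_j,\xi_{j+1})}$, its jumps at $\xi_j$ and $\xi_{j+1}$ are $+1$ and $-1$ respectively, which one immediately checks agree with the stated formula~\eqref{salto en L&M}.

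For the inductive step I apply $D^{p-m}$ to both sides of the derivative recursion, so that the left-hand side becomes $D^{p-m+1}B_{j,p}$ and each summand on the right-hand side becomes the $((p-1)-m+1)$-th derivative of a degree-$(p-1)$ B-spline. Taking jumps at $\xi$ and invoking the induction hypothesis on $B_{j,p-1}$ and $B_{j+1,p-1}$ (with the multiplicities of $\xi$ read off from the two sub-sequences $\xi_j,\ldots,\xi_{j+p}$ and $\xi_{j+1},\ldots,\xi_{j+p+1}$) yields $\JJJ_{\xi}(D^{p-m+1}B_{j,p})$ as a difference of two explicit fractions, each of the form prescribed by the lemma for the appropriate lower-degree B-spline.

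In the generic case $\xi_j \neq \xi \neq \xi_{j+p+1}$, the multiplicity of $\xi$ in both sub-sequences equals $m$, and the two contributions share the prefactor $\frac{p!}{(m-1)!}$. Their denominator products share a common middle factor $Q := \prod_{k=j+1,\,\xi_k\neq\xi}^{j+p}(\xi_k-\xi)$, differing only in the endpoint factors $(\xi_j-\xi)$ and $(\xi_{j+p+1}-\xi)$; combining over a common denominator and using the elementary identity $\frac{1}{\xi_j-\xi}-\frac{1}{\xi_{j+p+1}-\xi} = \frac{\xi_{j+p+1}-\xi_j}{(\xi_j-\xi)(\xi_{j+p+1}-\xi)}$ telescopes the expression into the right-hand side of~\eqref{salto en L&M}.

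The main obstacle is the boundary cases $\xi_j=\xi$ or $\xi_{j+p+1}=\xi$. In those situations the multiplicity of $\xi$ in one sub-sequence drops to $m-1$, or else $\xi$ falls outside the support of the corresponding lower-degree B-spline and its jump vanishes. A short case analysis handles each: the support-length factor $(\xi_{j+p}-\xi_j)$ or $(\xi_{j+p+1}-\xi_{j+1})$ appearing in the induction-hypothesis numerator cancels the denominator of the prefactor from the derivative recursion, and the numerator $(\xi_{j+p+1}-\xi_j)$ on the right-hand side coincides with the surviving boundary factor in the denominator product, so the claimed expression is recovered. The bookkeeping is routine, but this is where all the delicate checking lives.
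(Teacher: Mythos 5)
Your induction is the same one the paper uses --- the derivative recursion, followed by a three-way case split on whether $\xi$ is the first knot, the last knot, or interior --- and your handling of the generic and boundary cases is correct, including the cancellation of the support-length factors and the partial-fraction identity $\frac{1}{\xi_j-\xi}-\frac{1}{\xi_{j+p+1}-\xi}=\frac{\xi_{j+p+1}-\xi_j}{(\xi_j-\xi)(\xi_{j+p+1}-\xi)}$ in the interior case. However, there is one case your induction never reaches: $m=p+1$ with $p\ge 1$. Your inductive step applies $D^{p-m}$ to the recursion, which requires $p-m\ge 0$, i.e.\ $m\le p$; moreover, when $m=p+1$ one of the two lower-degree B-splines in the recursion has all of its knots coincident (e.g.\ $\xi_j=\dots=\xi_{j+p}$ when $\xi=\xi_j$), so the coefficient $p/(\xi_{j+p}-\xi_j)$ is undefined and the induction hypothesis cannot be invoked for that term. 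Your base case ($p=0$, $m=1$) only covers full multiplicity in degree zero.

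The fix is exactly what the paper does: verify \eqref{salto en L&M} directly for $m=p+1$ (for every $p$) as a second base case, using that $\JJJ_\xi(B_{j,p})=1$ when $\xi=\xi_j$ has multiplicity $p+1$ and $\JJJ_\xi(B_{j,p})=-1$ when $\xi=\xi_{j+p+1}$ has multiplicity $p+1$; the right-hand side of \eqref{salto en L&M} then reduces to $\pm(\xi_{j+p+1}-\xi_j)/(\xi_{j+p+1}-\xi_j)$. This case is not vacuous for the paper's purposes --- Theorem~\ref{T:jump formula} applies the lemma with multiplicity $m=\ell+1$, which can equal $p+1$ when the breakpoint has maximal multiplicity --- so the omission should be repaired rather than waved away as routine bookkeeping.
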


\begin{proof}
We will proceed by induction on the degree $p$ so we use a second subscript to make explicit the degree of the B-spline functions. Notice that the multiplicity $m$ satisfies $1\le m\leq p+1$. It is easy to verify that~\eqref{salto en L&M} holds for the case $m=p+1$, using the fact that $ \JJJ_\xi (B_{j,p})$ is equal to $1$ when $\xi=\xi_{j}$ and equal to $-1$ when $\xi=\xi_{j+p+1}$.

Thus, in particular, equality (\ref{salto en L&M}) holds when $p=0$ and $m=1$.

Now, suppose that  (\ref{salto en L&M}) holds for B-splines of degree $p-1$. Taking into account the recurrence formula for the derivative of B-splines (see e.g.~\cite[Theorem 3]{Cetraro}), and the linearity of $D^{r-1}$ for $r\ge 1$ and $\JJJ_{\xi}$, we have 
\begin{equation*}
\JJJ_{\xi} (D^r B_{j,p})= p \left( \frac{\JJJ_\xi (D^{r-1} B_{j,p-1})}{(\xi_{j+p}-\xi_j)} - \frac{\JJJ_\xi (D^{r-1} B_{j+1,p-1})}{(\xi_{j+p+1}-\xi_{j+1})} \right).
\end{equation*}
Taking $r=p-m+1$ we have that

\begin{equation} \label{recurrence jump}
    \JJJ_{\xi} (D^{p-m+1} B_{j,p})= p \left( \frac{\JJJ_\xi (D^{p-m} B_{j,p-1})}{(\xi_{j+p}-\xi_j)} - \frac{\JJJ_\xi (D^{p-m} B_{j+1,p-1})}{(\xi_{j+p+1}-\xi_{j+1})} \right).
\end{equation}
Since $\xi$ is one of the knots from $\xi_j, \ldots, \xi_{j+p+1}$, there are now three possible cases: $\xi=\xi_j$, $\xi=\xi_{j+p+1}$ or $\xi_j < \xi < \xi_{j+p-1}$.

\noindent$\bullet$ Case $\xi=\xi_j$. We observe that $\xi$ occurs $m-1$ times among the knots that define $B_{j+1,p-1}$, and $B_{j+1,p-1}$ has $p-m$ continuous derivatives in $\xi$, whence $\JJJ_{\xi}(D^{p-m} B_{j+1,p-1})=0$.
	Thus, applying the inductive assumption, we have
	\begin{align*}
	    \JJJ_{\xi}(D^{p-m+1} B_{j,p})&= p \frac{\JJJ_{\xi}(D^{p-m} B_{j,p-1})}{\xi_{j+p}-\xi_j}= p \frac{(p-1)!}{(m-1)!} \frac{(\xi_{j+p}-\xi_j)}{\D\prod_{k=j, \xi_k  \neq \xi}^{j+p}(\xi_k-\xi)} \frac{1}{(\xi_{j+p}-\xi_j)}\\
     &= \frac{p!}{(m-1)!} \frac{1}{\D\prod_{k=j, \xi_k  \neq \xi}^{j+p}(\xi_k-\xi)}
     =  \frac{p!}{(m-1)!} \frac{\xi_{j+p+1}-\xi}{\D\prod_{k=j, \xi_k  \neq \xi}^{j+p+1}(\xi_k-\xi)},
	\end{align*}
 which coincides with (\ref{salto en L&M}) because $\xi = \xi_j$.

\noindent$\bullet$ Case $\xi=\xi_{j+p+1}$. A similar argument implies the assertion.
	
 \noindent$\bullet$ Case $\xi_j < \xi < \xi_{j+p+1}$. Now, $\xi$ is a knot of multiplicity $m$ among the knots that define both $B_{j,p-1}$ and $B_{j+1,p-1}$. Applying (\ref{recurrence jump}) and the induction hypothesis, we then obtain
	\begin{align*}
\JJJ_{\xi}(D^{p-m+1} B_{j,p})&= \frac{p!}{(m-1)!} \left( \prod_{k=j, \xi_k  \neq \xi}^{j+p} \frac{1}{(\xi_k -\xi)} - \prod_{k=j+1, \xi_k  \neq \xi}^{j+p+1} \frac{1}{(\xi_k -\xi)}  \right)\\
	&= \frac{p!}{(m-1)!} \prod_{k=j+1, \xi_k  \neq \xi}^{j+p} \frac{1}{(\xi_k -\xi)} \left( \frac{1}{(\xi_j -\xi)} - \frac{1}{(\xi_{j+p+1}-\xi)}\right)\\
	&=\frac{p!}{(m-1)!} \frac{(\xi_{j+p+1} -\xi_j)}{\D \prod_{k=j, \xi_k  \neq \xi}^{j+p+1}(\xi_k - \xi)}.
	\end{align*}
	which completes the proof.
\end{proof}

As a consequence of the last lemma we can derive a simple and \emph{local} formula for computing the jump of the $(p-\ell)$-th derivative of $s\in\SSS$ at the breakpoint $\zeta_{\jcero}$.

\begin{theorem}\label{T:jump formula} \rm
Let $\SSS$ be a spline space and let $\Xi:= \{\xi_1, \dots, \xi_{n+p+1}\}$ be an associated $(p+1)$-basic knot vector. Let $\zeta_\jcero$ be an interior breakpoint and $\icero$ be the index such that $\zeta_\jcero = \xi_\icero$ and $\xi_{\icero-1}\le \xi_\icero <\xi_{\icero+1}$. Let $\ell+1$ denote the multiplicity of $\xi_{i_0}$ in the knot vector $\Xi$ and let $\jj :=(z_1,\dots,z_{p+2-\ell})^T\in\RR^{p+2-\ell}$ be defined by
\begin{equation}\label{E:z}
   z_{j-\icero+p+2}:= \frac{p!}{\ell!} \frac{(\xi_{j+p+1}-\xi_j)}{\D \prod_{k=j, \xi_k  \neq \xi_\icero}^{j+p+1}(\xi_k - \xi_\icero)},
\qquad 
j= \icero-p-1, \ldots, \icero-\ell. 
\end{equation}
Then, if $s\in\SSS$, and $s=\sum_{i=1}^n c_i B_i$, there holds
$$\JJJ_{\zeta_{\jcero}}(D^{p-\ell}s) = \jj^T {\bf c}_\dos,$$
where ${\bf c}_\dos=(c_{\icero-p-1}, \ldots, c_{\icero-\ell})^T$.
\end{theorem}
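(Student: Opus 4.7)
The plan is to exploit linearity of the jump operator $\JJJ_{\zeta_{j_0}}$ together with Lemma~\ref{Lemma Jumps for B splines}. Writing $s=\sum_{i=1}^n c_i B_i$, we have
\begin{equation*}
\JJJ_{\zeta_{j_0}}(D^{p-\ell}s) = \sum_{i=1}^n c_i \,\JJJ_{\xi_{i_0}}(D^{p-\ell}B_i),
\end{equation*}
so the task reduces to computing each jump $\JJJ_{\xi_{i_0}}(D^{p-\ell}B_i)$ and identifying the index set of nonzero contributions.

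Next I would identify which B-splines actually contribute. Recall that $\xi_{i_0}$ has global multiplicity $\ell+1$ in $\Xi$, occupying indices $i_0-\ell,\dots,i_0$. For a fixed $i$, let $m_i$ denote the multiplicity of $\xi_{i_0}$ among $\{\xi_i,\dots,\xi_{i+p+1}\}$, so that $B_i\in C^{p-m_i}$ at $\xi_{i_0}$; in particular $\JJJ_{\xi_{i_0}}(D^{p-\ell}B_i)=0$ whenever $p-\ell\le p-m_i$, i.e.\ whenever $m_i\le \ell$. Therefore only indices with $m_i\ge \ell+1$ can contribute, and by the global multiplicity this forces all $\ell+1$ copies of $\xi_{i_0}$ to lie inside $\{\xi_i,\dots,\xi_{i+p+1}\}$. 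This pins down the range to $i_0-p-1\le i\le i_0-\ell$, and shows that for these indices we have $m_i=\ell+1$ exactly, which is precisely the active index range of ${\bf c}_\dos$.

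Finally I would apply Lemma~\ref{Lemma Jumps for B splines} with $j=i$ and $m=\ell+1$ to each of these contributing B-splines, obtaining
\begin{equation*}
\JJJ_{\xi_{i_0}}(D^{p-\ell}B_j) = \frac{p!}{\ell!}\frac{\xi_{j+p+1}-\xi_j}{\D\prod_{k=j,\,\xi_k\neq\xi_{i_0}}^{j+p+1}(\xi_k-\xi_{i_0})} = z_{j-i_0+p+2},
\end{equation*}
for $j=i_0-p-1,\dots,i_0-\ell$, which matches~\eqref{E:z} exactly. Summing these $p+2-\ell$ contributions yields $\JJJ_{\zeta_{j_0}}(D^{p-\ell}s)=\jj^T{\bf c}_\dos$.

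The only conceptual obstacle is the bookkeeping in the second step: carefully relating the global multiplicity $\ell+1$ of $\xi_{i_0}$ in $\Xi$ to the local multiplicity $m_i$ relevant for the jump of $B_i$, and confirming that the resulting active indices coincide with those indexing ${\bf c}_\dos$. Everything else is a direct substitution into the formula of Lemma~\ref{Lemma Jumps for B splines}.
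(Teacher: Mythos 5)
Your proposal is correct and follows essentially the same route as the paper's proof: linearity of the jump, identification of the contributing B-splines as exactly those whose local knot vector contains all $\ell+1$ copies of $\xi_{\icero}$ (giving the index range $\icero-p-1\le j\le \icero-\ell$ with local multiplicity $m=\ell+1$), and then direct substitution into Lemma~\ref{Lemma Jumps for B splines}. The bookkeeping you flag as the only conceptual obstacle is handled identically in the paper, via the observation that $\xi_{\icero-\ell-1}<\xi_{\icero-\ell}=\dots=\xi_{\icero}<\xi_{\icero+1}$.
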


Notice that $\jj$ depends only on $\Xi_\loc$ in~\eqref{E:Xiloc} and the polynomial degree $p$. 

\begin{proof}
Let $\{\xi_j,\cdots,\xi_{j+p+1}\}$ be the local knot vector of the $j$-th B-spline $B_j$, for $j=1,\dots,n$. Since $\ell+1$ is the multiplicity of $\xi_{i_0}$ in $\Xi$, we have that $\xi_{i_0-\ell-1}<\xi_{i_0-\ell}=\dots =\xi_{i_0}<\xi_{i_0+1}$. Thus, $\xi_{i_0}$ appears $m=\ell+1$ times in the local knot vector of $B_j$ provided $j= i_0-p-1,\dots,i_0-\ell$. Otherwise, the multiplicity of $\xi_{i_0}$ among the knots $\xi_j,\cdots,\xi_{j+p+1}$ is at most $\ell$ and in consequence, $D^{p-\ell}B_j$ is continuous at $\xi_{i_0}$.
Now, if $s=\sum_{j=1}^{n} c_j B_j\in\SSS$, using (\ref{salto en L&M}) we have that
\begin{equation*} \label{Salto de la derivada de orden p menos l}
	\JJJ_{\zeta_\jcero} (D^{p-\ell} s)=\JJJ_{\xi_\icero} (D^{p-\ell} s)= \sum_{j=\icero-p-1}^{\icero-\ell} c_j \JJJ_{\xi_\icero}(D^{p-\ell} B_j)= \sum_{j=\icero-p-1}^{\icero-\ell} \frac{p!}{\ell!} \frac{(\xi_{j+p+1}-\xi_j)}{\D \prod_{k=j, \xi_k  \neq \xi_\icero}^{j+p+1}(\xi_k - \xi_\icero)} c_j,
\end{equation*}
which is the desired assertion.
\end{proof}

\subsection{Relationship between the error and the jump}
\label{S:error-vs-jump}

Using the result from the two previous subsections, we now state the precise connection between the error $\EEE{\jcero}(s)$ defined in~\eqref{E:error} and the jump $\JJJ_{\zeta_\jcero}(D^{p-\ell}s)$.

\begin{theorem}[Main result II]\label{T:error y salto} \rm

Let $\SSS$ be a spline space and let $\Xi:= \{\xi_1, \dots, \xi_{n+p+1}\}$ be an associated $(p+1)$-basic knot vector. Let $\zeta_\jcero$ be an interior breakpoint and $\icero$ be the index such that $\zeta_\jcero = \xi_\icero$ and $\xi_{\icero-1}\le \xi_\icero <\xi_{\icero+1}$. Let $\ell+1$ denote the multiplicity of $\xi_{i_0}$ in the knot vector $\Xi$. If $s\in\SSS$,
\begin{equation}\label{E:error-residuo}
	\EEE{\jcero}(s)= C_\loc |\JJJ_{\zeta_\jcero}(D^{p-\ell} s)|,
\end{equation}
where $C_\loc$ is a positive constant which depends only on $\Xi_\loc$ in~\eqref{E:Xiloc}, defined explicitly by 
\begin{equation}\label{Cloc}
	C_\loc:=r_{p+2-\ell} \frac{ \ell!}{ p!} \prod_{k=\icero+1}^{\icero+p-\ell} (\xi_k -\xi_{\icero}),
\end{equation}
where $r_{p+2-\ell}$ is the constant from the statement of Theorem~\ref{T:error formula}.
\end{theorem}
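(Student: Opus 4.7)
The plan is to combine Theorems~\ref{T:error formula} and~\ref{T:jump formula}: both express $\EEE{\jcero}(s)$ and $\JJJ_{\zeta_\jcero}(D^{p-\ell}s)$ as scalar products of the local coefficient vector ${\bf c}_\dos\in\RR^{p+2-\ell}$ with fixed vectors $\rr$ and $\jj$, respectively, that depend only on $\Xi_\loc$ and $p$. Thus, proving~\eqref{E:error-residuo} reduces to the vector identity $\rr=C_\loc\,\jj$, after which taking absolute values gives the claimed formula.

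To establish this proportionality, I would first show that both $\rr$ and $\jj$ lie in the one-dimensional subspace $\col(A_\loc)^\perp\subset\RR^{p+2-\ell}$. The orthogonality $\rr^T A_\loc=0$ was already proved within the proof of Theorem~\ref{T:error formula}. For $\jj$, suppose ${\bf c}_\dos=A_\loc\cc{\bf c}_\dos$ for some $\cc{\bf c}_\dos\in\RR^{p+1-\ell}$; extending arbitrarily to a full coarse coefficient vector $\cc{\bf c}$ and applying~\eqref{global system} produces a spline $s\in\cc{\SSS}$ whose middle block of control points is exactly ${\bf c}_\dos$. Since every spline in $\cc{\SSS}$ has $p-\ell$ continuous derivatives at $\zeta_\jcero$, Theorem~\ref{T:jump formula} gives $\jj^T{\bf c}_\dos=\JJJ_{\zeta_\jcero}(D^{p-\ell}s)=0$. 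Because $A_\loc$ has full column rank $p+1-\ell$, its orthogonal complement in $\RR^{p+2-\ell}$ is a line, so $\rr$ and $\jj$ must be scalar multiples of one another.

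The proportionality constant is then pinned down by testing both vectors against the last standard basis vector ${\bf e}_{p+2-\ell}$. By construction, $\rr^T{\bf e}_{p+2-\ell}=r_{p+2-\ell}$. Plugging $j=\icero-\ell$ into~\eqref{E:z} and using the multiplicity relation $\xi_{\icero-\ell}=\cdots=\xi_\icero$, the product in the denominator reduces to $k=\icero+1,\dots,\icero+p-\ell+1$, and the factor $\xi_{\icero+p-\ell+1}-\xi_\icero$ cancels with the numerator $\xi_{\icero-\ell+p+1}-\xi_{\icero-\ell}$, yielding
\begin{equation*}
    z_{p+2-\ell}=\frac{p!}{\ell!}\,\frac{1}{\prod_{k=\icero+1}^{\icero+p-\ell}(\xi_k-\xi_\icero)}.
\end{equation*}
The ratio $r_{p+2-\ell}/z_{p+2-\ell}$ is precisely the expression~\eqref{Cloc}, and positivity of $C_\loc$ follows from $r_{p+2-\ell}=\gamma_\loc e_{p+2-\ell}>0$ and $\xi_k-\xi_\icero>0$ for $k\ge\icero+1$. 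Taking absolute values in $\rr^T{\bf c}_\dos=C_\loc\,\jj^T{\bf c}_\dos$ yields~\eqref{E:error-residuo}.

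The only delicate point is the orthogonality $\jj\perp\col(A_\loc)$: it rests on identifying the local algebraic condition ${\bf c}_\dos\in\col(A_\loc)$ with the global geometric condition $s\in\cc{\SSS}$, that is, the extra continuous derivative at $\zeta_\jcero$. Once this is in place, the rest of the argument is a one-dimensional linear-algebra observation plus a short explicit computation on a single basis vector.
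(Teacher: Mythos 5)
Your proof is correct, and it takes a genuinely different route from the paper's. The paper establishes the key identity $\rr = C_\loc\,\jj$ by brute force: it derives closed-form expressions for every component $r_j$ (by unrolling the recurrence~\eqref{r: recurrence formula} and expanding the products of the $\mu_i$) and for every $1/z_j$, and then checks that the ratio $r_j/z_j$ equals $C_\loc$ for all $j$. You instead observe that both $\rr$ and $\jj$ annihilate $\col(A_\loc)$ --- the former was already shown inside the proof of Theorem~\ref{T:error formula}, and for the latter you use that ${\bf c}_\dos\in\col(A_\loc)$ corresponds to a spline in $\cc{\SSS}$, whose $(p-\ell)$-th derivative is continuous at $\zeta_\jcero$ since that breakpoint has multiplicity $\ell$ in $\cc{\Xi}$ --- so that, $A_\loc$ having full column rank (its leading $(p+1-\ell)\times(p+1-\ell)$ block is lower triangular with positive diagonal since $\alpha_j>0$ for $j\le p+1-\ell$), both vectors lie on the same line and the constant is read off from the last components alone. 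Your computation of $z_{p+2-\ell}$ and the resulting ratio $r_{p+2-\ell}/z_{p+2-\ell}=C_\loc$ is correct, as is the positivity argument. What your approach buys is brevity and robustness: one orthogonality observation plus one scalar computation replaces the componentwise bookkeeping of signs and index ranges in~\eqref{E:productoria de los mu} and~\eqref{E:aux zj}; it also makes transparent \emph{why} the two functionals must be proportional (both vanish exactly on the codimension-one subspace $\cc{\SSS}$), a point the paper only mentions informally in Remark~\ref{R:Other indicators}. What the paper's computation buys is an independent verification of the explicit formula~\eqref{E:z} against~\eqref{r: recurrence formula} on every component, which your argument takes on faith from Theorem~\ref{T:jump formula}.
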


\begin{proof}
In view of Theorems~\ref{T:error formula} and~\ref{T:jump formula}, to establish the equality in~\eqref{E:error-residuo} it will be enough to prove that
$$\rr^T=C_\loc \jj^T.$$
On the one hand, using the recursive formula for $r_j$ in~\eqref{r: recurrence formula} we have that
\begin{equation}\label{E:aux rj}
r_j =  r_{p+2-\ell}(-1)^{p-\ell-j} \prod_{i=j}^{p+1-\ell}\mu_i, 
\end{equation}
for $j = 1,\dots, p+2-\ell$, and taking into account~\eqref{E:mu} and~\eqref{E:alpha},
\begin{equation}\label{E:productoria de los mu}
    \prod_{i=j}^{p+1-\ell} \mu_i=\frac{1}{\alpha_j} \prod_{i=j+1}^{p+1-\ell} \frac{1-\alpha_i}{\alpha_i}=\frac{1}{\alpha_j} \prod_{i=j+1}^{p+1-\ell} \frac{\xi_{\icero-1+i}-\xi_\icero}{\xi_\icero - \xi_{\icero-p-2+i}}= (-1)^{p-\ell-j+1}\frac{1}{\alpha_j} \frac{\D\prod_{k=\icero+j}^{\icero+p-\ell}(\xi_k-\xi_\icero)}{\D\prod_{k=\icero-p+j-1}^{\icero-\ell-1}(\xi_k-\xi_\icero)} .
\end{equation} 
On the other hand, since $\xi_{i_0-\ell-1}<\xi_{i_0-\ell}=\dots =\xi_{i_0}<\xi_{i_0+1}$, from~\eqref{E:z} we obtain
\begin{equation}\label{E:aux zj}
    \frac{1}{z_j}= -\alpha_j\frac{\ell!}{p!}  \D\prod_{k=\icero-p-1+j}^{\icero-1+j} (\xi_k -\xi_\icero)= -\alpha_j\frac{\ell!}{p!} \D\prod_{k=\icero-p-1+j}^{\icero-\ell-1} (\xi_k -\xi_\icero)\D\prod_{k=\icero+1}^{\icero-1+j} (\xi_k -\xi_\icero).
\end{equation}
Finally,~\eqref{E:aux rj},~\eqref{E:productoria de los mu} and~\eqref{E:aux zj} imply that $\D\frac{r_j}{z_j}= C_\loc$, for $j = 1,\dots, p+2-\ell$,
which completes the proof.
\end{proof}

If we focus on the $L^2$-norm, using Theorems~\ref{T:relation between the minimization problems} and~\ref{T:estability} and the last theorem we conclude the following result.

\begin{corollary}\rm

Under the assumptions of Theorem~\ref{T:error y salto}, let $\cc{\SSS}$ be the spline space associated to $\cc{\Xi}:= \Xi \setminus \{\xi_\icero\}$. Then, for any $s\in\SSS$, 
	\begin{equation}\label{E:main}
	\min_{\cc{s}\in\cc{\SSS}}\|s- \cc{s}\|_{L^2} \le C_\loc |\JJJ_{\zeta_\jcero}(D^{p-\ell}s)|,
	\end{equation}
 with $C_\loc$ as in~\eqref{Cloc}.
	
\end{corollary}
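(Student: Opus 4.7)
The plan is a short chain that assembles the three previously established facts. The left-hand side of~\eqref{E:main} is the $L^2$-distance from $s$ to $\cc\SSS$, while the quantity $\EEE{\jcero}(s)$ controlled by Theorem~\ref{T:error y salto} is the $\Xi$-distance from $s$ to $\cc\SSS$. The bridge between the two is Theorem~\ref{T:estability}, whose usable direction for our purposes is the pointwise inequality $\|\cdot\|_{L^2}\le\|\cdot\|_\Xi$ valid on the ambient space $\SSS$. Since $\cc\SSS\subset\SSS$, the difference $s-\cc s$ belongs to $\SSS$ for every $\cc s\in\cc\SSS$, so this inequality can be applied to it.

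Concretely, I would let $\cc s\in\cc\SSS$ be the best approximation of $s$ in the $\Xi$-norm (its existence and characterization are already provided by Theorem~\ref{Teo del sistema y error local}). Then the following three-step chain suffices: first, by definition of the $L^2$-minimum and because $\cc s\in\cc\SSS$, we have $\min_{g\in\cc\SSS}\|s-g\|_{L^2}\le\|s-\cc s\|_{L^2}$; second, Theorem~\ref{T:estability} applied to $s-\cc s\in\SSS$ gives $\|s-\cc s\|_{L^2}\le\|s-\cc s\|_\Xi=\EEE{\jcero}(s)$; finally, Theorem~\ref{T:error y salto} identifies $\EEE{\jcero}(s)$ with $C_\loc|\JJJ_{\zeta_\jcero}(D^{p-\ell}s)|$. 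Concatenating the three inequalities yields exactly~\eqref{E:main}.

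I do not anticipate any real obstacle, as this is a corollary in the literal sense. The only point worth verifying carefully is the direction in which Theorem~\ref{T:estability} is invoked: we need the upper bound $\|\cdot\|_{L^2}\le\|\cdot\|_\Xi$, which is the constant-free direction and is precisely the useful one for passing from the easily-computable $\Xi$-norm error to the $L^2$-error that appears in the statement. Theorem~\ref{T:relation between the minimization problems} is not strictly needed for the inequality~\eqref{E:main} itself; it serves a complementary role, explaining that replacing the true $L^2$-best approximation by the $\Xi$-norm best approximation $\cc s$ loses at most a factor of $K_p$ in the $L^2$ error, and thereby justifying the computational strategy of working with the localized $\Xi$-norm minimizer from Theorem~\ref{Teo del sistema y error local}.
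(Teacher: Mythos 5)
Your chain is correct and is essentially the argument the paper intends (the corollary is stated without a written proof, only the remark that it follows from Theorems~\ref{T:relation between the minimization problems}, \ref{T:estability} and~\ref{T:error y salto}); your observation that Theorem~\ref{T:relation between the minimization problems} is not actually needed for the one-sided bound~\eqref{E:main} is accurate, since the constant-free direction of Theorem~\ref{T:estability} applied to $s-\cc s\in\SSS$ together with the identity $\|s-\cc s\|_\Xi=\EEE{\jcero}(s)=C_\loc|\JJJ_{\zeta_\jcero}(D^{p-\ell}s)|$ already closes the argument.
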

We remark that although the right hand side in~\eqref{E:main} is fully computable, this quantity actually excessively overestimates the $L^2$-error in general.

\section{Algorithms for adaptive knot removal}\label{S:Algorithms}

Theorem~\ref{T:error formula} provides a simple formula to compute the error when removing a single knot from a spline. In this section we use that formula as an error indicator and develop an adaptive algorithm for coarsening. More precisely, if $s$ is a spline and $\Tol>0$ is a prescribed tolerance, the algorithm computes a spline $\cc{s}$, which belongs to a coarser spline space, such that $\|s-\cc{s}\|\le \Tol$. We consider some norms that are important in applications such as the $L^2$-, the $L^\infty$- and the $H^1$-norm.   

\subsection{Algorithm for the $L^2$- and the $\xi$-norm}

Algorithm~\ref{alg:L2adaptive knots removal} is our proposed adaptive coarsening algorithm for knot removal up to a tolerance. It starts with a prescribed value $\Tol > 0$ and a spline $s \in \SSS$, where $\SSS$ denotes the spline space associated to a given $(p+1)$-basic knot vector $\Xi$. Its goal is to remove a large number of knots and find a new spline in this coarser spline space, such that the distance between the original spline and the final one, in ${L^2}$- as well as in ${\Xi}$-norm, is less than $\Tol$.

\algdef{SE}[SUBALG]{Indent}{EndIndent}{}{\algorithmicend\ }%
\algtext*{Indent}
\algtext*{EndIndent}

\begin{algorithm}[htb]
\setstretch{1.35}
	\caption{\bf $L^2$\_adaptive\_knot\_removal}  \label{alg:L2adaptive knots removal}
	\begin{algorithmic}[1]
		\Statex \textbf{Input:} $\Tol>0$,  ${\bf c}$ and $\Xi$ (Here, ${\bf c}$ contains the B-spline coefficients of $s\in \SSS$, where $\SSS$ is the spline space associated to a $(p+1)$-basic knot vector~$\Xi$) 
  \State Let $k_{\max}= \# \Xi - 2(p+1)$ be the number of interior knots.
  \State $\{ \varepsilon_j^{(0)}\}_{j=2}^{N-1}\gets \text{\bf compute\_local\_indicators} ({\bf c}, \Xi)$ \Comment{$N$: number of breakpoints of $\Xi$}
		\State $j_*^{(0)} \gets 
  \arg\min_{2\le j\le N-1} \{ \varepsilon_j^{(0)}\}$
  \State $k=0$, $N^{(0)}=N$, ${\bf c}^{(0)} = {\bf c},\, \Xi^{(0)}=\Xi$ 
		\State While ($\varepsilon_{j_*^{(k)}}^{(k)} <$ TOL and $k< k_{\max}$)
  \Indent
  \State TOL $\gets \text{TOL}-\varepsilon_{j_*^{(k)}}^{(k)}$
		\State ${\bf c}_\loc^{(k)} \gets \text{\bf extract\_local\_control\_points} ({\bf c}^{(k)}) $
        \State ${\bf c}_\loc^{(k+1)} \gets \text{\bf compute\_new\_local\_control\_points} ({\bf c}_\loc^{(k)})$
		\State ${\bf c}^{(k+1)} \gets \text{\bf compute\_new\_control\_points} ({\bf c}^{(k)},{\bf c}_{\loc}^{(k+1)})$
            \State $[\Xi^{(k+1)},N^{(k+1)}] \gets \textbf{update\_knot\_vector}(\Xi^{(k)})$
            \State $\{ \varepsilon_j^{(k+1)}\}_{j=2}^{N^{(k+1)}-1}\gets \text{\bf update\_local\_indicators} (\{ \varepsilon_j^{(k)}\}_{j=2}^{N^{(k)}-1})$
		\State $k=k+1$
		\State $j_*^{(k)} \gets 
  \arg\min_{2\le j\le N^{(k)} -1} \{ \varepsilon_j^{(k)}\}$
\EndIndent
\State end While
\State $\cc{\Xi}=\Xi^{(k)}$, $\cc{\bf c}= {\bf c}^{(k)}$
		\Statex \textbf{Output:} $\cc{\bf c}$ and $\cc{\Xi}$ (Now, $\cc{s}\in\cc{\SSS}$, where $\cc{\SSS}$ is the spline space associated to $\cc{\Xi}$ and $\cc{\bf c}$ contains the B-spline coefficients of $\cc{s}$)
	\end{algorithmic}
\end{algorithm}

We now describe the main modules inside this algorithm. The super-index $(k)$ refers to the $k$-th iteration of the while loop. Since the algorithm removes one knot per iteration, it also indicates that $k$ knots have been removed up to that time.

\noindent - In line 2, we compute the local indicators $\varepsilon_j^{(0)}$ for each $j\in \{2, \ldots, N-1\}$ defined by 
$$\varepsilon_j^{(0)}:= \EEE{j}(s),$$
where $\EEE{j}(s)$ is characterized by Theorem \ref{T:error formula}. 

\noindent - The goal of lines 7 to 9 is to compute the control points of a new spline belonging to the space that results from removing one knot in the $j_*^{(k)}$-th breakpoint of $\Xi^{(k)}$. We have split this stage in three steps in order to emphasize the local nature of this update:

\begin{itemize}
    \item In line 7 we just select some components of the vector ${\bf c}^{(k)}$. More especifically, let $i_*= p+1+ \sum_{r=2}^{j_*^{(k)}} m_r^{(k)}$, where $m_r^{(k)}$ is the multiplicity of $r$-th breakpoint in $\Xi^{(k)}$ and let $\ell_*= m_{j_*}^{(k)}-1$. Then
$${\bf c}_\loc^{(k)}= \left( c_{i_* -p-1}^{(k)}, \ldots, c_{i_*-\ell_*}^{(k)} \right),$$
where $c_i^{(k)}$ is the $i$-th component of ${\bf c}^{(k)}$. 
\item In line 8, we compute the new local control points ${\bf c}_\loc^{(k+1)}$ as the least squares solution of the following system
$$ E_\loc A_\loc {\bf c}_\loc^{(k+1)}= E_\loc {\bf c}_\loc^{(k)},$$
where $A_\loc, E_\loc$ are defined in \eqref{Matriz Aloc} and \eqref{ej} with $\icero= i_*$, $\ell= \ell_*$ and considering the knots in $\Xi^{(k)}$.

\item 
In line 9, we assemble the new vector of control points ${\bf c}^{(k+1)}$ by replacing the subvector ${\bf c}_{\loc}^{(k)}$ by ${\bf c}_{\loc}^{(k+1)}$. Thus, we have
\begin{equation} \label{E: new coefficient vector}
\begin{cases}
	c_i^{(k+1)}= c_i^{(k)}, \,\ \,\ \,\ i \leq i_* -p-2,\\
	c_i^{(k+1)}= c_{i+1}^{(k)}, \,\ \,\ \,\ i\geq i_* -\ell_*.
 & 
\end{cases}
\end{equation}
\end{itemize}

\noindent - In line 10, we remove the knot $\xi_{i_*}$ from the current knot vector, that is, $\Xi^{(k+1)} := \Xi^{(k)} \setminus \{\xi_{i_*}\}$, so that
\begin{equation} \label{update opne knot vector}
  \begin{cases}
	\xi_i^{(k+1)}= \xi_i^{(k)}, \,\ \,\ \,\ i= 1, \ldots, i_* -1,\\
	\xi_i^{(k+1)}= \xi_{i+1}^{(k)}, \,\ \,\ \,\ i= i_*, \ldots, n+p,& 
\end{cases}
\end{equation}
and we also let $N^{(k+1)}$ be the number of breakpoints of $\Xi^{(k+1)}$, given by $N^{(k+1)}= N^{(k)}$ when $\ell_* > 0$, and $N^{(k+1)}= N^{(k)}-1$ when $\ell_* = 0$.

\noindent - In line 11, we compute the local indicators $\{\varepsilon_j^{(k+1)}\}_{j=2}^{N^{(k+1)}-1}$ as explained below. Such indicators are defined by
\begin{equation}\label{E:epsilon j+1}
    \varepsilon_j^{(k+1)} := \mathbb{E}_{\Xi^{(k+1)}, j}(s_{k+1}),
\end{equation}
where $s_{k+1}$ is the spline function with control points ${\bf c}^{(k+1)}$ in the spline space associated to the knot vector $\Xi^{(k+1)}$. According to Theorem~\ref{T:error formula} and Remark~\ref{R:calculo del error} we have that  $\varepsilon_j^{(k+1)}$ depends only on the knots in  $\Xi_{\loc,i_j}^{(k+1)}:=\{\xi_{i_j -p-1}^{(k+1)}, \ldots, \xi_{i_j +p+1-\ell_{j}}^{(k+1)}\}$ and the coefficients in ${\bf c}_{\loc,i_j}^{(k+1)}:= \{c_{i_j -p-1}^{(k+1)}, \ldots, c_{i_j-\ell_{j}}^{(k+1)}\}$. Here, $i_j := p+1+\sum_{r=2}^j m_r^{(k+1)}$, where  $m_r^{(k+1)}$ denotes the multiplicity of $r$-th breakpoint in $\Xi^{(k+1)}$ and $\ell_{j}= m_{j}^{(k+1)}-1$.
Thus, taking into account \eqref{E: new coefficient vector} and \eqref{update opne knot vector} we have that\footnote{Notice that if $\ell_*\ge 1$, then $m_j^{(k+1)}= m_j^{(k)}$, for all $j\neq j_*$. When $\ell_*=0$, we have that $m_j^{(k+1)}= m_j^{(k)}$, for $j<j_*$ and $m_j^{(k+1)}= m_{j+1}^{(k)}$, for $j\ge j_*$.} 

\begin{itemize}
    \item if $i_j \leq i_* -p-2$, $\Xi_{\loc,i_j}^{(k+1)}=\Xi_{\loc,i_j}^{(k)}$ and ${\bf c}_{\loc,i_j}^{(k+1)}={\bf c}_{\loc,i_j}^{(k)}$, whence $\varepsilon_j^{(k+1)} = \varepsilon_j^{(k)}$. 
    \item if $i_j \geq i_* +p+1$, $\Xi_{\loc,i_j}^{(k+1)}=\Xi_{\loc,i_j+1}^{(k)}$ and ${\bf c}_{\loc,i_j}^{(k+1)}={\bf c}_{\loc,i_j+1}^{(k)}$, so that
     $$\varepsilon_j^{(k+1)} =  \begin{cases}
	\varepsilon_{j+1}^{(k)}, \,\ \,\ \,\ \text{if} \,\ \ell_* = 0,\\
	\varepsilon_j^{(k)}, \,\ \,\ \,\ \text{if} \,\ \ell_*\geq 1.& 
\end{cases}$$
\end{itemize}
Therefore, we have to compute $\varepsilon_j^{(k+1)}$ using~\eqref{E:epsilon j+1} only for the few indices $j$ such that $i_* -p-1 \leq i_j \leq i_* +p$.

We conclude this section with the following bound for the discrepancy between the original spline $s$ and the output of  Algorithm~\ref{alg:L2adaptive knots removal}.
\begin{theorem}\label{thm:L2}
	\rm Let $s\in \SSS$ and $\Tol>0$. Then, Algorithm~\ref{alg:L2adaptive knots removal} finishes after a finite number of iterations and returns a spline $\cc{s}\in \cc{\SSS}$ such that
	\begin{equation*} \label{Eq del T del Primer algoritmo}
		\| s -\cc{s} \|_{L^2}\le \| s -\cc{s} \|_{\Xi}< \Tol.
	\end{equation*}
\end{theorem}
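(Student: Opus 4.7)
The plan is to control $\|s-\cc s\|_\Xi$ via a telescoping triangle inequality, using the per-iteration error identities built into Algorithm~\ref{alg:L2adaptive knots removal} together with a monotonicity property of the mesh-dependent norm under knot insertion. Finite termination will be immediate: each pass through the while--loop removes exactly one knot from the current knot vector, so the loop runs at most $k_{\max}$ times. I will denote by $K\le k_{\max}$ the number of iterations that are actually executed, and by $s_k\in\SSS^{(k)}$ the intermediate spline whose control points in the basis associated to $\Xi^{(k)}$ are ${\bf c}^{(k)}$, so that $s_0=s$ and $s_K=\cc s$.

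The first step is to extract two facts from the design of the algorithm. On the one hand, Theorem~\ref{Teo del sistema y error local} guarantees that the local update performed in lines 7--9 produces exactly the best $\Xi^{(k)}$-norm approximation of $s_k$ in the one-dimension-smaller space $\SSS^{(k+1)}$, whence
\[
\|s_k - s_{k+1}\|_{\Xi^{(k)}} = \varepsilon_{j_*^{(k)}}^{(k)}.
\]
On the other hand, the update of the running tolerance in line 6, combined with the strict loop condition in line 5, forces $\sum_{k=0}^{K-1}\varepsilon_{j_*^{(k)}}^{(k)}<\Tol$ by a straightforward induction on the running value of $\Tol$.

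The main obstacle is that the norm in each identity above is $\|\cdot\|_{\Xi^{(k)}}$, which changes from step to step, while the target bound is stated in the fixed original norm $\|\cdot\|_\Xi$. To bridge this gap I will establish a monotonicity of the mesh-dependent norm under knot insertion: for every refinement $\cc\Xi\subset\Xi$ and every $f\in\cc\SSS\subset\SSS$, one has $\|f\|_\Xi\le\|f\|_{\cc\Xi}$. The key observation is the standard identity $\omega_i^2=\int_a^b B_i$, which rewrites $\|s\|_\Xi^2=\int_a^b\sum_i c_i^2 B_i(x)\,dx$. Because every row of the knot-insertion matrix $A$ is a convex combination, Jensen's inequality applied to $c_i=\sum_j A_{ij}\cc c_j$ yields $c_i^2\le\sum_j A_{ij}\cc c_j^2$; swapping summations and invoking the refinement relation $\cc B_j=\sum_i A_{ij}B_i$ gives the pointwise bound $\sum_i c_i^2 B_i(x)\le\sum_j\cc c_j^2\cc B_j(x)$, and integration delivers the desired inequality. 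The general case follows by iterating this one-knot comparison along the chain $\Xi\supset\Xi^{(1)}\supset\cdots\supset\Xi^{(K)}$.

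With the monotonicity in hand the conclusion is quick. Since each difference $s_k-s_{k+1}$ belongs to $\SSS^{(k)}\subset\SSS$, the triangle inequality in $\|\cdot\|_\Xi$ combined with the monotonicity and the per-iteration identity yields
\[
\|s-\cc s\|_\Xi \le \sum_{k=0}^{K-1}\|s_k-s_{k+1}\|_\Xi \le \sum_{k=0}^{K-1}\|s_k-s_{k+1}\|_{\Xi^{(k)}} = \sum_{k=0}^{K-1}\varepsilon_{j_*^{(k)}}^{(k)} < \Tol,
\]
while $\|s-\cc s\|_{L^2}\le\|s-\cc s\|_\Xi$ is an immediate application of Theorem~\ref{T:estability} in the original space $\SSS$.
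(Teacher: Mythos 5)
Your argument is correct and follows essentially the same route as the paper's proof: finite termination from the loop bound, the per-iteration identity $\|s_k-s_{k+1}\|_{\Xi^{(k)}}=\varepsilon^{(k)}_{j_*^{(k)}}$ supplied by Theorem~\ref{Teo del sistema y error local}, the running-tolerance bookkeeping giving $\sum_{k=0}^{K-1}\varepsilon^{(k)}_{j_*^{(k)}}<\Tol$, a telescoping triangle inequality in the fixed norm $\|\cdot\|_{\Xi}$, monotonicity of the mesh-dependent norm under knot insertion to pass from $\|\cdot\|_{\Xi^{(k)}}$ to $\|\cdot\|_{\Xi}$, and finally $L^2$-stability. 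The one point of divergence is that the paper simply cites the monotonicity step from an external reference (Proposition 5.2 of the Lyche--M{\o}rken data-reduction paper), whereas you prove it yourself via $\omega_i^2=\tfrac{1}{p+1}(\xi_{i+p+1}-\xi_i)=\int B_i$, the convexity of the rows of the knot-insertion matrix $A$, and Jensen's inequality; this makes the proof self-contained, which is a genuine gain. Your lemma is sound, with one small caveat: $\int_a^b B_i$ equals $\omega_i^2$ only when the support $[\xi_i,\xi_{i+p+1}]$ is contained in $[a,b]$, which can fail for the boundary functions of a general basic (non-open) knot vector. The fix is to integrate each B-spline over its full support; then your swap of summations produces the exact refinement identity $\cc{\omega}_j^2=\sum_i A_{ij}\,\omega_i^2$, and combining it with the Jensen bound $c_i^2\le\sum_j A_{ij}\cc{c}_j^{\,2}$ gives $\sum_i c_i^2\omega_i^2\le\sum_j\cc{c}_j^{\,2}\cc{\omega}_j^{\,2}$ directly, without any pointwise integral representation of the norm. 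Everything else matches the paper's proof.
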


\begin{proof}
	It is clear that Algorithm~\ref{alg:L2adaptive knots removal} finishes after $K\leq k_{\max}$ iterations. 
 If $s_{k}$ is the spline function with control points ${\bf c}^{(k)}$ in the spline space associated to the knot vector $\Xi^{(k)}$ we have that
$$\varepsilon^{(k)}:= \varepsilon_{j_*^{(k)}}^{(k)}= \| s_{k} - s_{k+1}\|_{\Xi^{(k)}}, \,\ k=0, \ldots, K-1.$$
	Moreover, $\cc{\Xi}= \Xi^{(K)}$ and $\cc{s}=s_K\in\cc{\SSS}$, where $\cc{\SSS}$ is the spline space associated to $\cc{\Xi}$.
Now, Theorem~\ref{T:estability} yields
$$\| s -\cc{s} \|_{L^2}\le \| s -\cc{s} \|_{\Xi} =  \| s_0 -s_K \|_{\Xi^{(0)}} \leq \sum_{k=0}^{K-1} \| s_k-s_{k+1}\|_{\Xi^{(0)}}.$$
Since $\Xi^{(k)}$ is a subsequence of $\Xi^{(0)}$, from~\cite[Proposition 5.2]{T.L&K.M} we obtain that $\| s_k-s_{k+1}\|_{\Xi^{(0)}}\le \varepsilon^{(k)}$. Finally, noticing that the algorithm guarantees that 
$$\varepsilon^{(K-1)}< \Tol- (\varepsilon^{(0)}+ \varepsilon^{(1)}+ \ldots + \varepsilon^{(K-2)}),$$
we conclude that
$$\| s -\cc{s} \|_{L^2}\le \| s -\cc{s} \|_{\Xi} \leq \sum_{k=0}^{K-1} \varepsilon^{(k)}<\Tol,$$
which completes the proof.
\end{proof}

\subsection{Algorithm for the $L^\infty$-norm}

In this section we explain how Algorithm~\ref{alg:L2adaptive knots removal} can be  easily modified in order to obtain an approximation for a given spline in a coarser space such that the distance in ${L^\infty}$-norm is less than a prescribed tolerance.

We denote by $\| \cdot \|_\cpinfnorm$ the spline norm of $s$ defined by
$\|s\|_\cpinfnorm:= \| {\bf c}\|_\infty = \max_{i=1, \ldots,n} |c_i|$, where $c_i$ is the $i$-th B-spline coefficient of $s$. Thus, the $L^\infty$-stability for the B-spline basis (cf.~\eqref{estability of Bsplines in Linfty}) reads:
\begin{equation} \label{equivalencia de normas}
		K_p^{-1} \|s\|_\cpinfnorm \leq \|s\|_{L^\infty} \leq \|s\|_\cpinfnorm, \qquad \forall s\in \SSS.
	\end{equation}
Following the notation introduced at the beginning of Section~\ref{knot removal} we consider 
\begin{equation} \label{Residuo con norma cp}
    \EEE{\jcero}^{\| \cdot \|_\cpinfnorm}(s)= \min_{g\in \cc{\SSS}} \| s-g\|_\cpinfnorm,
\end{equation}
and notice that the spline $g$ achieving the minimum in \eqref{Residuo con norma cp} is, in general, not unique. 
Besides, it is easy to check that the minimum in
 $ \EEE{\jcero}^{\| \cdot \|_\cpinfnorm}(s)= \min_{{\bf z}\in \RR^{p+2-\ell}} \| {\bf c}_\loc - A_\loc {\bf z}\|_\infty$
is achieved at a unique ${\bf \cc{c}}_\loc$ given by
\begin{equation}\label{E:problema local Linf}
    {\bf \cc{c}}_\loc= \argmin_{{\bf z}\in \RR^{p+2-\ell}} \| {\bf c}_\loc - A_\loc {\bf z}\|_\infty.
\end{equation}

 From the analysis in~\cite[Section 4]{T.L&K.M}, it follows that the square matrix $M:=[A_\loc \mid \bf{s}]$ is non-singular, where ${\bf s}= (s_1, \ldots, s_{p+2-\ell})^T$ with $s_i=(-1)^{p-\ell-i}$, for $i =1, \ldots, p+2-\ell$. Moreover, if ${\bf x}= (x_1, \ldots, x_{p+2-\ell})^T$ is the solution of $M{\bf x}= {\bf c}_\loc$, then $\cc{\bf c}_\loc = (x_1, \ldots, x_{p+1-\ell})^T$ and
 $\EEE{\jcero}^{\| \cdot \|_\cpinfnorm}(s)=|x_{p+2-\ell}|$.

Algorithm~\ref{alg:Linf knots removal} is the coarsening algorithm for the $L^\infty$-norm.

\begin{algorithm}[htb]
\setstretch{1.35}
	\caption{\bf  $L^\infty$\_adaptive\_knot\_removal}  \label{alg:Linf knots removal}
	\begin{algorithmic}[1]
\Statex Follow the same lines in Algorithm~\ref{alg:L2adaptive knots removal} taking into account the slight modifications detailed below:

\noindent - In line 2, we compute the local indicators $\varepsilon_j^{(0)}:=\EEE{j}^{\| \cdot\|_\cpinfnorm}(s)$ for each $j\in \{2, \ldots, N-1\}$. 

\noindent - In line 8, we compute ${\bf {c}}_\loc^{(k+1)}$ given by
$$  {\bf {c}}_\loc^{(k+1)}= \argmin_{{\bf z}\in \RR^{p+2-\ell_*}} \| {\bf c}_\loc^{(k)} - A_\loc {\bf z}\|_\infty,$$
as explained above, 
where $A_\loc$ is defined in \eqref{Matriz Aloc} with $\icero= i_*$, $\ell= \ell_*$ and considering the knots in $\Xi^{(k)}$.

\noindent - In line 11, we proceed as in line 11 of Algorithm~\ref{alg:L2adaptive knots removal} and compute only $\varepsilon_j^{(k+1)} := \mathbb{E}^{\cpinfnorm}_{\Xi^{(k+1)}, j}(s_{k+1})$, for $i_* -p-1 \leq i_j \leq i_* +p$.
	\end{algorithmic}
\end{algorithm}

\begin{remark}
    Notice that the explanation of line 11 of Algorithm~\ref{alg:L2adaptive knots removal} also applies in this case because $\varepsilon_j^{(k+1)}$ depends only on the local knot insertion matrix and on the same few control points as before, see~\eqref{E:problema local Linf}.
\end{remark}

We conclude this section with the following result. 

\begin{theorem}
	\rm Let $s\in \SSS$ and $\Tol>0$. Then, Algorithm~\ref{alg:Linf knots removal} finishes after a finite number of iterations and returns a spline $\cc{s}\in \cc{\SSS}$ such that
	\begin{eqnarray*}
		\| s -\cc{s} \|_{L^\infty} < \Tol.
	\end{eqnarray*}
\end{theorem}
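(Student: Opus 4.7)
The plan mirrors the proof of Theorem~\ref{thm:L2}, replacing the $L^2$/$\Xi$ pair by the $L^\infty$/$\cpnorm\infty$ pair. Termination is immediate since the while loop runs at most $k_{\max}$ times; let $K\le k_{\max}$ be the number of iterations actually performed, and let $s=s_0,s_1,\dots,s_K=\cc{s}$ be the sequence of intermediate splines, with $s_k$ living in the space $\SSS^{(k)}$ associated to $\Xi^{(k)}$.

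First I would identify the local indicator $\varepsilon^{(k)}:=\varepsilon_{j_*^{(k)}}^{(k)}$ with $\|s_k-s_{k+1}\|_\cpinfnorm$ measured in the B-spline basis of $\SSS^{(k)}$. This follows from the block structure~\eqref{Matriz A} of the global knot insertion matrix: the coefficient vector ${\bf c}^{(k)}-A\,\cc{\bf c}^{(k+1)}$ of $s_k-s_{k+1}$ in the basis of $\SSS^{(k)}$ vanishes outside the local block, and its local part ${\bf c}_\loc^{(k)}-A_\loc\,\cc{\bf c}_\loc^{(k+1)}$ has $\ell^\infty$ norm equal to $\EEE{j_*^{(k)}}^{\|\cdot\|_\cpinfnorm}(s_k)=\varepsilon^{(k)}$ by the definition~\eqref{E:problema local Linf} of $\cc{\bf c}_\loc^{(k+1)}$.

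Next I would invoke~\eqref{equivalencia de normas} to bound $\|s-\cc{s}\|_{L^\infty}\le\|s-\cc{s}\|_\cpinfnorm$ computed in the basis of $\SSS=\SSS^{(0)}$, and then use the triangle inequality on the coefficient vector to obtain $\sum_{k=0}^{K-1}\|s_k-s_{k+1}\|^{(0)}_\cpinfnorm$, where the superscript indicates that the $\cpnorm\infty$-norm is computed in the basis of $\SSS^{(0)}$. The key remaining step—and the only one requiring real care—is a non-expansiveness property of knot insertion in $\cpnorm\infty$-norm: since $\SSS^{(k)}\subset\SSS^{(0)}$ is obtained by successive single-knot insertions, the coefficient map from $\SSS^{(k)}$ to $\SSS^{(0)}$ is a product of matrices of the form~\eqref{Matriz A}--\eqref{Matriz Aloc}; inspection of~\eqref{Matriz Aloc} together with $\alpha_1=1$, $\alpha_{p+2-\ell}=0$ and $0\le\alpha_j\le1$ shows that each such matrix has non-negative entries with rows summing to one, and therefore their composition is also row-stochastic and hence non-expansive on $\ell^\infty$. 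This delivers $\|s_k-s_{k+1}\|^{(0)}_\cpinfnorm\le\varepsilon^{(k)}$.

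Assembling these bounds with the loop invariant (the update $\Tol\gets\Tol-\varepsilon^{(k)}$ together with the stopping guard $\varepsilon^{(K-1)}<\Tol_{\text{initial}}-\sum_{k=0}^{K-2}\varepsilon^{(k)}$) yields $\sum_{k=0}^{K-1}\varepsilon^{(k)}<\Tol_{\text{initial}}$ and hence $\|s-\cc{s}\|_{L^\infty}<\Tol$. The main obstacle I anticipate is the row-stochasticity verification for the iterated local knot insertion matrices, which plays here the same role that~\cite[Proposition 5.2]{T.L&K.M} played in the proof of Theorem~\ref{thm:L2}; once this is in hand, the rest is a direct transcription of the $L^2$ argument.
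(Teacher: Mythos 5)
Your proof is correct, and its skeleton (telescoping over the $K$ increments, identifying $\varepsilon^{(k)}$ with $\|s_k-s_{k+1}\|_\cpinfnorm$ in the basis of $\SSS^{(k)}$, and summing via the loop invariant) matches the paper's. The one genuine difference is where you apply the triangle inequality and what auxiliary fact you therefore need. The paper telescopes directly in the $L^\infty$ function norm, $\|s_0-s_K\|_{L^\infty}\le\sum_k\|s_k-s_{k+1}\|_{L^\infty}$, and then applies the upper bound of~\eqref{equivalencia de normas} to each increment \emph{in its own space} $\SSS^{(k)}$, so no comparison between coefficient norms in different bases is ever needed. You instead telescope at the coefficient level in the fixed basis of $\SSS^{(0)}$, which forces you to prove that the map from $\cc{\bf c}$ to ${\bf c}$ under iterated knot insertion is non-expansive in $\ell^\infty$; your verification via row-stochasticity of~\eqref{Matriz A}--\eqref{Matriz Aloc} (non-negative entries, rows summing to one, using $\alpha_1=1$, $\alpha_{p+2-\ell}=0$, $0\le\alpha_j\le1$) is valid, and this is the exact $\ell^\infty$ analogue of the role played by \cite[Proposition 5.2]{T.L&K.M} in the proof of Theorem~\ref{thm:L2}. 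So your route is more faithful to the $L^2$ template but carries an extra (correctly handled) lemma; the paper's route is shorter because the $L^\infty$ norm, unlike the $\Xi$-norm, is basis-independent and lets one telescope at the function level for free.
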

\begin{proof}
Similarly to the proof of Theorem~\ref{thm:L2}, Algorithm~\ref{alg:Linf knots removal} finishes after $K\leq k_{\max}$ iterations. 
 If $s_{k}$ is the spline function with control points ${\bf c}^{(k)}$ in the spline space associated to the knot vector $\Xi^{(k)}$ we have that
$$\varepsilon^{(k)}:= \varepsilon^{(k)}_{j^{(k)}_*}= \mathbb{E}_{\Xi^{(k)}, j^{(k)}_*}^{\| \cdot\|_\cpinfnorm}(s_k)= \| s_k-s_{k+1} \|_\cpinfnorm, \,\ k=0, \ldots, K-1.$$
	Now, since $\cc{\Xi}= \Xi^{(K)}$ and $\cc{s}=s_K\in\cc{\SSS}$, where $\cc{\SSS}$ is the spline space associated to $\cc{\Xi}$, and taking into account~\eqref{equivalencia de normas}, we conclude that
 $$      \| s - \cc{s}\|_{L^\infty} 
      = \| s_0 - s_K \|_{L^\infty} 
      \leq \sum_{k=0}^{K-1} \| s_k - s_{k+1} \|_{L^\infty}
      \leq \sum_{k=0}^{K-1} \| s_k - s_{k+1} \|_\cpinfnorm
      \\
      =\sum_{k=0}^{K-1} \varepsilon^{(k)} <  \Tol.
$$
\end{proof}

\subsection{Algorithm for the $H^1$-norm}

We conclude this section by considering the case of the $H^1$-norm. Roughly speaking, given a tolerance $\Tol>0$ and a $C^0$-spline function $s$, we first apply Algorithm~\ref{alg:L2adaptive knots removal} to the right-derivative $s'$ of $s$ to obtain a coarsen approximation $\hat{s}'$ satisfying $\| s' - \hat{s}' \|_{L^2} < \Tol'$, for a suitable value of $\Tol'>0.$
We then integrate the spline $\cc{s}'$ to obtain a spline $\cc{s}$ such that $\| s - \hat{s} \|_{H^1} < \Tol$.

Let $\SSS$ denote the spline space associated to a $(p+1)$-open\footnote{The $(p+1)$-basic knot vector $\Xi$ is called \emph{open} if $\xi_{1}=\dots=\xi_{p+1}$ and $\xi_{n+1}=\dots=\xi_{n+p+1}$.} knot vector $\Xi= \{ \xi_i\}_{i=1}^{n+p+1}$. 
In this section, we assume that the multiplicity of each breakpoint is at most $p$ so that $\SSS\subset C[a,b]$ and $\SSS\subset H^1(a,b)$. 
The set of the right-derivatives, defined by $\SSS':=\{s'\,\mid\, s\in\SSS\}$ can be characterized as the spline space associated to the $p$-open knot vector $\Xi':=\{\xi_i\}_{i=2}^{n+p}$, cf.~\cite[Theorem 7]{Cetraro}.

If ${\bf c} = (c_1,\dots,c_n)^T$ denotes the vector of B-spline coefficients of $s\in\SSS$, it is well known that the vector ${\bf c}' = (c_2',\dots, c_n')^T$ of B-spline coefficients of $s'$ is given by
\begin{equation}\label{coeff de sprima}
    c_{i}'= \left( \frac{c_{i}-c_{i-1}}{\xi_{i}^* - \xi_{i-1}^*} \right), \qquad i=2, \ldots, n,
\end{equation} 
where $\xi_i^*:= \frac{\xi_{i+1} + \ldots + \xi_{i+p}}{p}$ denotes the $i$-th Greville abscissa.

\begin{algorithm}[htb]
\setstretch{1.35}
	\caption{\bf $H^1$\_adaptive\_knot\_removal}  \label{alg:H1adaptive knots removal}
	\begin{algorithmic}[1]
		\Statex \textbf{Input:} $\Tol>0$,  ${\bf c}$ and $\Xi$ (Here, ${\bf c}$ contains the B-spline coefficients of $s\in \SSS$, where $\SSS\subset C[a,b]$ is the spline space associated to a $(p+1)$-open knot vector~$\Xi$) 
		\State $ [{\bf c}',\Xi']\gets {\bf compute\_control\_points\_of\_the\_derivative}({\bf c},\Xi)$ 
  \State $\Tol' = \Tol/\sqrt{(b-a)^2+1}$ 
		\State ${[\bf \cc{c}',\cc\Xi']} \gets \text{\bf $L^2$\_adaptive\_knots\_removal}(\Tol', {\bf c}', \Xi')$ \Comment{Algorithm~\ref{alg:L2adaptive knots removal}} 
  \State Build $\cc{\Xi}$ from  $\cc\Xi'$
    \State $\cc{\bf c} \gets \text{\bf compute\_control\_points\_of\_the\_primitive}( \cc{\bf c}',\cc{\Xi})$ 
    \Statex \textbf{Output:} $\cc{\bf c}$ and $\cc{\Xi}$ (Now, $\cc{s}\in\cc{\SSS}$, where $\cc{\SSS}$ is the spline space associated to $\cc{\Xi}$ and $\cc{\bf c}$ contains the B-spline coefficients of $\cc{s}$)
	\end{algorithmic}
\end{algorithm}

Algorithm~\ref{alg:H1adaptive knots removal} is the coarsening algorithm for the $H^1$-norm, the modules of which we explain in the following paragraphs.

- In line 1 we compute the B-spline coefficients ${\bf c}'$ of $s'$ using~\eqref{coeff de sprima}, and build the corresponding $p$-open knot vector by removing one occurence of the first and the last knot from~$\Xi$.

- In line 3 we apply Algorithm~\ref{alg:L2adaptive knots removal} to obtain a spline $\hat{s}'$ satisfying $\| s' - \hat{s}' \|_{L^2} < \Tol'$, with $\Tol'$ as defined in line 2. Notice that Algorithm~\ref{alg:L2adaptive knots removal} is applied in a space with splines of degree $\le p-1$.  

- In line 4 we build the $(p+1)$-open knot vector $\cc{\Xi}$  obtained from $\cc{\Xi}'$ by adding one time the first and the last knot.

- In line 5 we compute the B-spline coefficients ${\bf \cc{c}}= (\cc{c}_1,\dots, \cc{c}_{\cc{n}})^T$ of $\cc{s}$ as follows
\begin{equation} \label{coeff of f sombrero}
\begin{split}
    \cc{c}_1 &= c_1,\\
    \cc{c}_i &= \cc{c}'_{i} ( \cc{\xi}^*_i - \cc{\xi}^*_{i-1}) + \cc{c}_{i-1} , \,\ \,\ \,\ i= 2, \ldots, \cc{n},
\end{split}    
\end{equation}
where ${\bf \cc{c}'}=(\cc{c}_2',\dots,\cc{c}_{\cc{n}}')^T$ are the B-spline coefficients of $\cc{s}'$, and $\cc{\xi}_i^*$ are the Greville abscissas associated to $\cc{\Xi}$.

At this point, it is important to emphasize that $\cc{\Xi}$ can be indeed obtained from $\Xi$ by removing some knots. Additionally, we remark that since $\cc{\Xi}$ and $\Xi$ are \emph{open}, the first equality in~\eqref{coeff of f sombrero} guarantees that $\cc{s}(a)=s(a)$.

Regarding Algorithm~\ref{alg:H1adaptive knots removal}, we have the following result.

\begin{theorem}
	\rm Let $s\in \SSS$ and $\Tol>0$. Then, Algorithm~\ref{alg:H1adaptive knots removal} finishes and returns a spline $\cc{s}\in \cc{\SSS}$ such that
	\begin{eqnarray*}
		\| s -\cc{s} \|_{H^1} < \Tol.
	\end{eqnarray*}
\end{theorem}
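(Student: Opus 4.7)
The plan is to reduce the $H^1$-error bound to the $L^2$-error bound already proved for Algorithm~\ref{alg:L2adaptive knots removal}, via a Poincar\'e-type inequality that exploits the fact that $s$ and $\cc{s}$ agree at the left endpoint $a$.
First, I would verify that the algorithm terminates: this is immediate since line 3 invokes Algorithm~\ref{alg:L2adaptive knots removal}, which terminates in finitely many steps by Theorem~\ref{thm:L2}, while all the other operations are single-pass calculations on finite data.

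Next, I would establish the key identity $(\cc{s})' = \cc{s}'$ and the boundary condition $\cc{s}(a) = s(a)$. The reconstruction formula~\eqref{coeff of f sombrero} is precisely the inverse of~\eqref{coeff de sprima}: since $\cc{\Xi}$ is a $(p+1)$-open knot vector, the B-spline coefficients of the derivative of the spline reconstructed from $\cc{\bf c}$ via~\eqref{coeff de sprima} coincide with $\cc{c}_i' = (\cc{c}_i - \cc{c}_{i-1})/(\cc\xi_i^* - \cc\xi_{i-1}^*)$, which is exactly $\cc{\bf c}'$ by construction. Moreover, since $\cc{\Xi}$ is open, the first B-spline coefficient equals the value of the spline at $a$, so $\cc{s}(a) = \cc{c}_1 = c_1 = s(a)$.

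Then I would apply Theorem~\ref{thm:L2} (with tolerance $\Tol'$) to the output of line 3, yielding
\begin{equation*}
\|s' - \cc{s}'\|_{L^2} < \Tol' = \frac{\Tol}{\sqrt{(b-a)^2+1}}.
\end{equation*}
Since $(s-\cc{s})(a) = 0$, for every $x\in[a,b]$ the fundamental theorem of calculus and the Cauchy--Schwarz inequality give
\begin{equation*}
 |(s-\cc{s})(x)|^2 = \left|\int_a^x (s-\cc{s})'(t)\,dt\right|^2 \le (x-a)\int_a^x |(s-\cc{s})'(t)|^2\,dt \le (b-a)\,\|s'-\cc{s}'\|_{L^2}^2.
\end{equation*}
Integrating over $[a,b]$ yields the Poincar\'e-type bound $\|s-\cc{s}\|_{L^2}^2 \le (b-a)^2\,\|s'-\cc{s}'\|_{L^2}^2$.

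Finally, combining the two estimates gives
\begin{equation*}
\|s - \cc{s}\|_{H^1}^2 = \|s - \cc{s}\|_{L^2}^2 + \|s' - \cc{s}'\|_{L^2}^2 \le \bigl((b-a)^2 + 1\bigr)\,\|s' - \cc{s}'\|_{L^2}^2 < \bigl((b-a)^2+1\bigr)(\Tol')^2 = \Tol^2,
\end{equation*}
which is the claimed bound. The only subtle point, and the one I would be most careful about, is the verification that the integration step in line 5 really produces a spline in $\cc{\SSS}$ whose derivative is exactly $\cc{s}'$; everything else is a direct reduction to Theorem~\ref{thm:L2} plus the elementary Poincar\'e inequality on $[a,b]$.
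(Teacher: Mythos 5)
Your proposal is correct and follows essentially the same route as the paper: reduce to Theorem~\ref{thm:L2} applied to the derivative, use $\cc{s}(a)=s(a)$ to get the Poincar\'e inequality $\|s-\cc{s}\|_{L^2}\le (b-a)\,|s-\cc{s}|_{H^1}$, and combine. You simply spell out two steps the paper leaves implicit (the explicit Poincar\'e derivation and the verification that the reconstruction in line 5 inverts~\eqref{coeff de sprima} so that $(\cc{s})'=\cc{s}'$), which is a harmless and indeed welcome addition.
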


\begin{proof}
Due to Theorem~\ref{thm:L2}, we have that $ | s- \cc{s} |_{H^1} < \Tol' = \frac{\Tol}{\sqrt{(b-a)^2+1}}$. Additionally, since $\cc{s}(a)=s(a)$, the following Poincaré inequality holds:
\begin{equation*}
    \| s- \cc{s}\|_{L^2} \leq (b-a) |s-\cc{s} |_{H^1}.
\end{equation*}
Thus,
$$ 
\| s- \cc{s}\|_{H^1}^2 = \| s-\cc{s}\|_{L^2}^2 + |s-\cc{s} |_{H^1}^2 \leq [(b-a)^2+1]|s-\cc{s} |_{H^1}^2 < [(b-a)^2+1]\Tol'^2= \Tol^2,
$$
which concludes the proof.
\end{proof}

\section{Numerical experiments}\label{S:Numerical experiments}

We finish this article with some numerical tests that show the performance of the algorithms proposed in the previous section and briefly illustrate some useful applications to data reduction and local coarsening in numerical methods for partial differential equations.

\begin{example}[Adaptive coarsening and adaptive refinement]

In this test we illustrate the fact that the local coarsening can be regarded as the reverse procedure of local refinement. We consider the Runge function $f_1(x) = \frac{1}{1+x^2}$, for $-5\le x\le 5$, and perform adaptive refinement in order to approximate it considering the $L^2$-projection onto the space of $C^0$ splines of degree $\le p$ defined on the graded meshes. We have considered $p= 2$ and $p= 4$. Then, starting with the finest adaptive mesh and the corresponding spline that best approximates $f_1$ we apply Algorithm~\ref{alg:L2adaptive knots removal} and compute the $L^2$-error after each knot removal. The results obtained with four strategies are presented in Figure~\ref{F:coarsening ex1_1}. Strategy 1 consists in applying Algorithm~\ref{alg:L2adaptive knots removal} as stated in the previous section whereas the other strategies consider different error indicators.
Strategies 2, 3 and 4 consider $\EEE{j}^{\| \cdot \|_{\cpnorm}}(s)$ given in~\eqref{E:error en norma sin peso}, $|D|$ (see Remark~\ref{R:Other indicators}) and the jumps (cf.~Theorems~\ref{T:jump formula} and~\ref{T:error y salto}) as local indicators, respectively.
Strategies 1 to 3 show optimal slopes for the error in terms of degrees of freedom. 
The behavior of strategy 4 is very poor, and we thus disregard it in the subsequent experiments.

\begin{figure}[h]
	\begin{center}
		\includegraphics[ width=.48\textwidth]{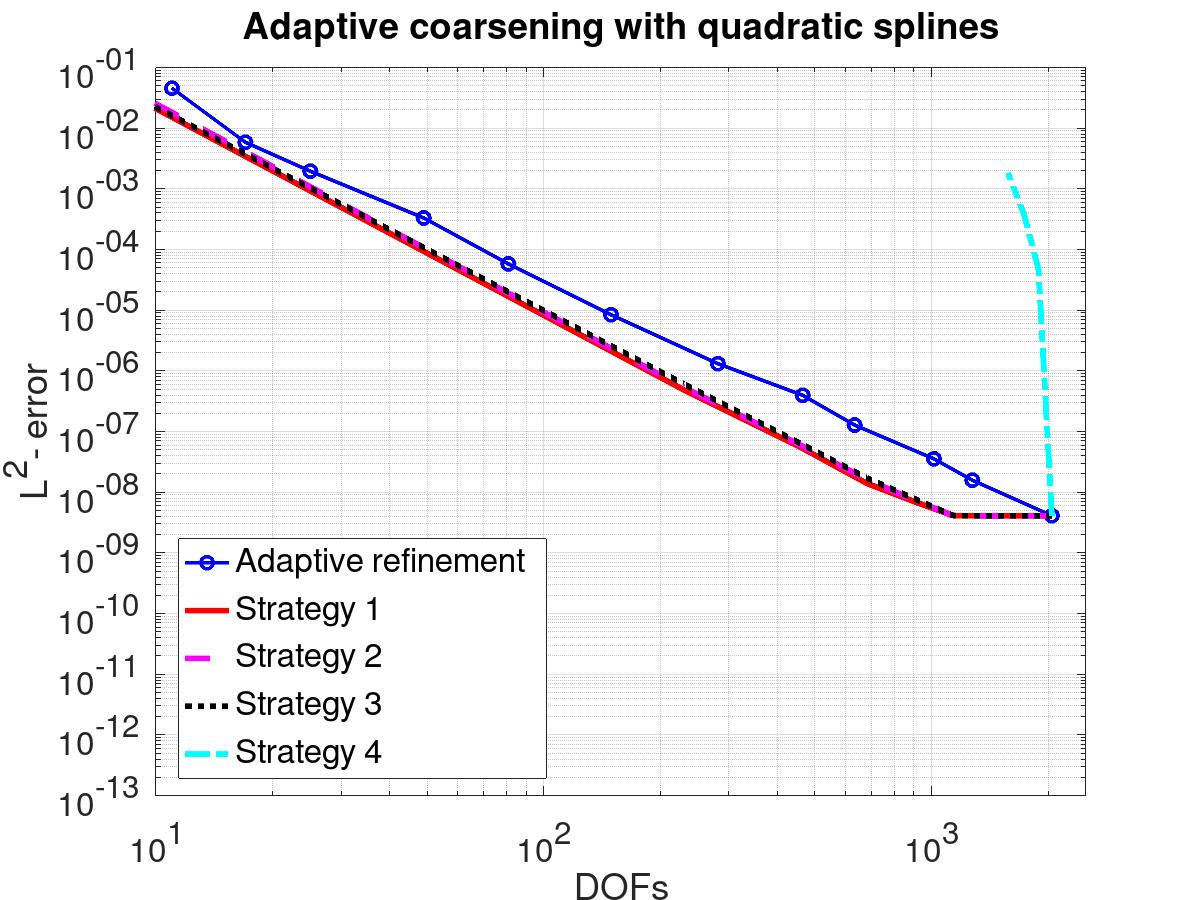}\hfill\includegraphics[width=.48\textwidth]{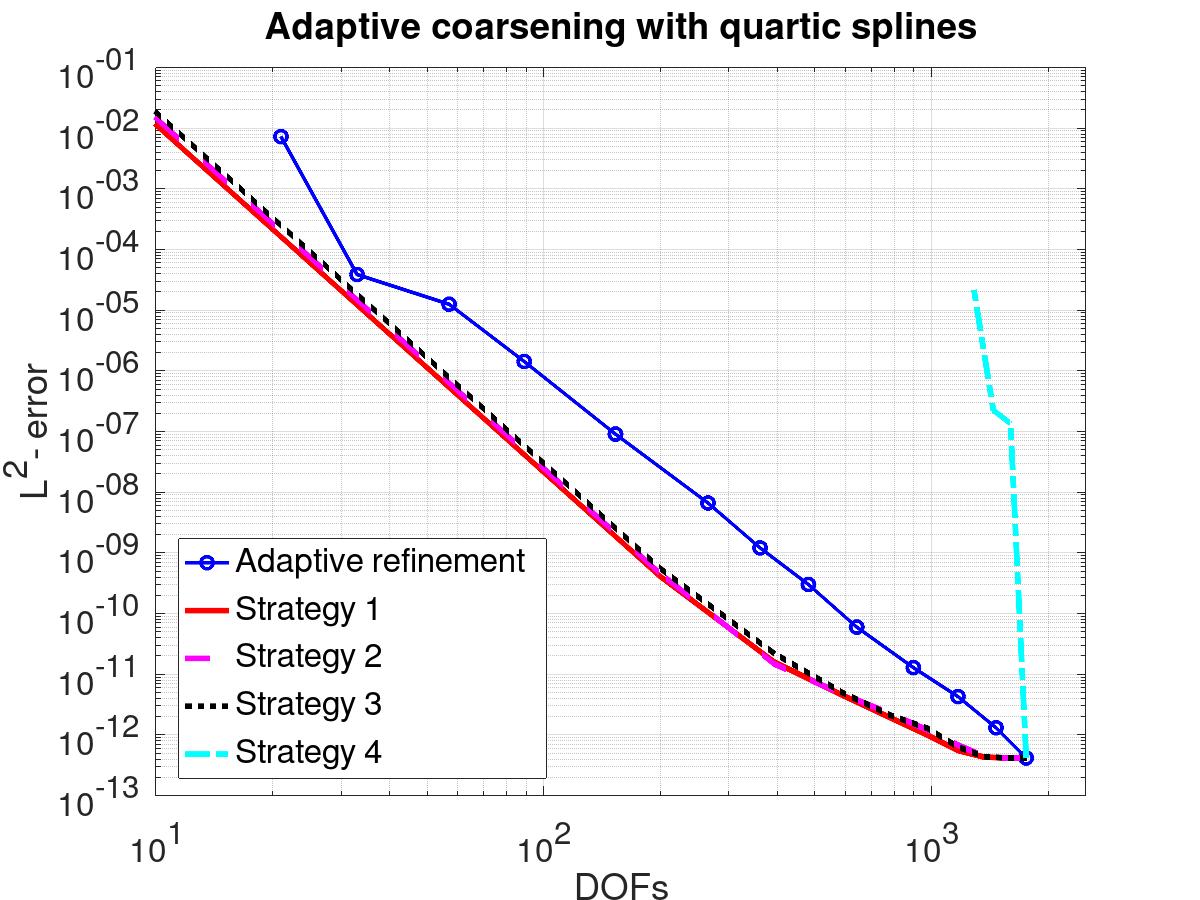}
		\caption{Adaptive coarsening for $f_1(x) = \frac{1}{1+x^2}$, for $-5\le x\le 5$. Strategies 1 to 3, using $\EEE{j}(s)$, $\EEE{j}^{\| \cdot \|_{\cpnorm}}(s)$ and $|D|$ as local indicators, respectively, show optimal slopes for the error in terms of degrees of freedom. The behavior of strategy 4, using the jumps as local indicators, is very poor.} 
		\label{F:coarsening ex1_1}
	\end{center}
\end{figure}

Since Strategies 1 and 2 correspond to solving minimization problems in the~\emph{localized} norms $\|\,\cdot\|_{\Xi}$ and $\|\,\cdot\|_{\cpnorm}$, respectively, the choice of the control points vector $\bf\cc{c}$ after each knot removal is clear, inexpensive, and explicitly stated in Algorithm~\ref{alg:L2adaptive knots removal}. The fact that only a few control points change after each iteration implies that the update of the error indicators is also very cheap because most of them remain unchanged. In this respect, the only drawback of Strategy 2 is that the norm $\|\,\cdot\|_{\cpnorm}$ is not equivalent to the $L^2$-norm. Regarding Strategies 3 and 4, it is not clear how the control points should be defined after each knot removal. In order to benefit these strategies we computed $\cc{s}$ as the $L^2$-projection of $s$ after each iteration (by solving a global linear system). This is not so convenient computationally, as compared to the first two strategies.

In Figure~\ref{F:coarsening ex1_2} we show the error curves for Strategies 1 to 3 applied to $f_2(x)=\sqrt[5]{x}$ on the interval $[-1,1]$. It is notorious in this example that Strategy 1 outperforms the other two. This is to be expected because the former is the only one that is guaranteed to keep an approximation below a given tolerance in $L^2$. Strategies 2 and 3 perform well, but are sub-optimal, and no theory guarantees to keep the $L^2$-error under control.

Thus, we conclude that our algorithm automatically selects the most convenient knots to be sequentially removed. 

\begin{figure}[h!tbp]
	\begin{center}
		\includegraphics[ width=.48\textwidth]{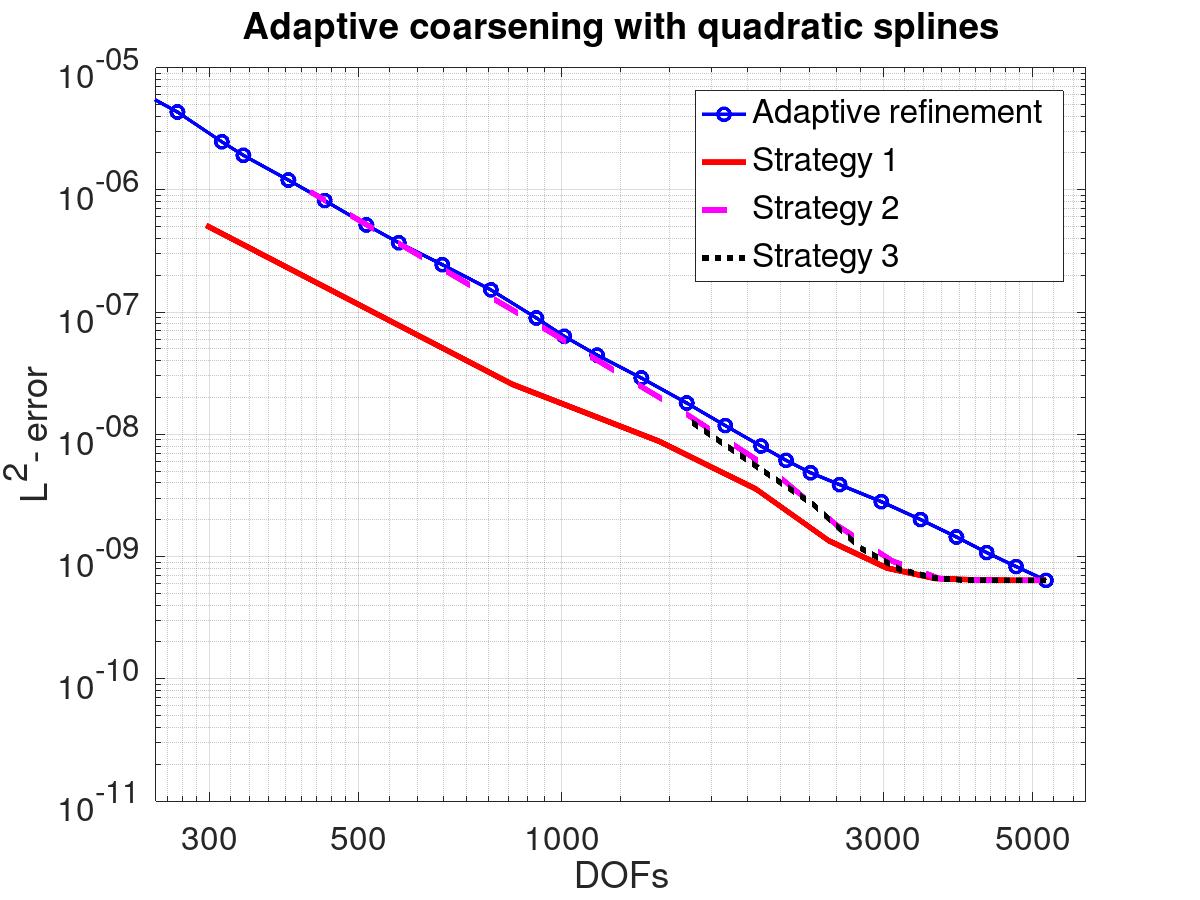}\hfill\includegraphics[width=.48\textwidth]{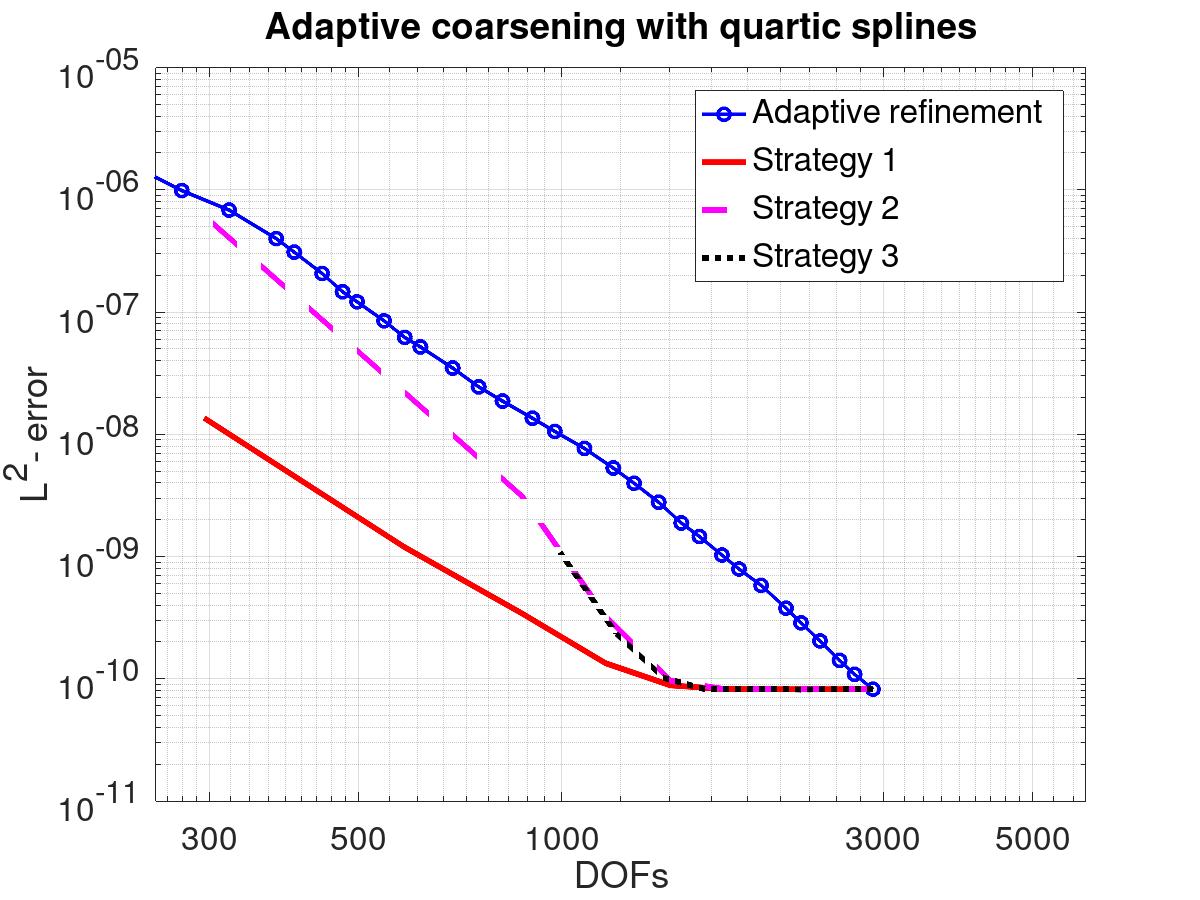}
		\caption{Adaptive coarsening for $f_2(x) = \sqrt[5]{x}$, for $-1\le x\le 1$.
                 It is notorious in this example that Strategy 1 outperforms the other two. This is to be expected because the former is the only one that is guaranteed to keep an approximation below a given tolerance in $L^2$. Strategies 2 and 3 perform well, but are sub-optimal, and no theory guarantees to keep the $L^2$-error under control.
            } 
		\label{F:coarsening ex1_2}
	\end{center}
\end{figure}

\end{example}

\begin{example}[Data approximation in maximum-norm] We consider an example inspired by~\cite[Example 6.3]{T.L&K.M}. We sample the Runge function 
at 101 equally spaced points over the interval $[-5,5]$. 
We consider the continuous piecewise linear interpolant to the data written as a linear combination of $C^0$ cubic B-splines. We then apply Algorithm~\ref{alg:Linf knots removal} to remove knots until only 7 and 3 interior knots remain, respectively. In Figure~\ref{F:Linfty} we show these approximations, and the error with respect to the original sampling. The results  obtained here by simply applying Algorithm~\ref{alg:L2adaptive knots removal} are similar to those from~\cite[example 6.3]{T.L&K.M} where a multistage data reduction technique has been used.
 
\begin{figure}[h]
	\begin{center}
		\includegraphics[ width=.48\textwidth]{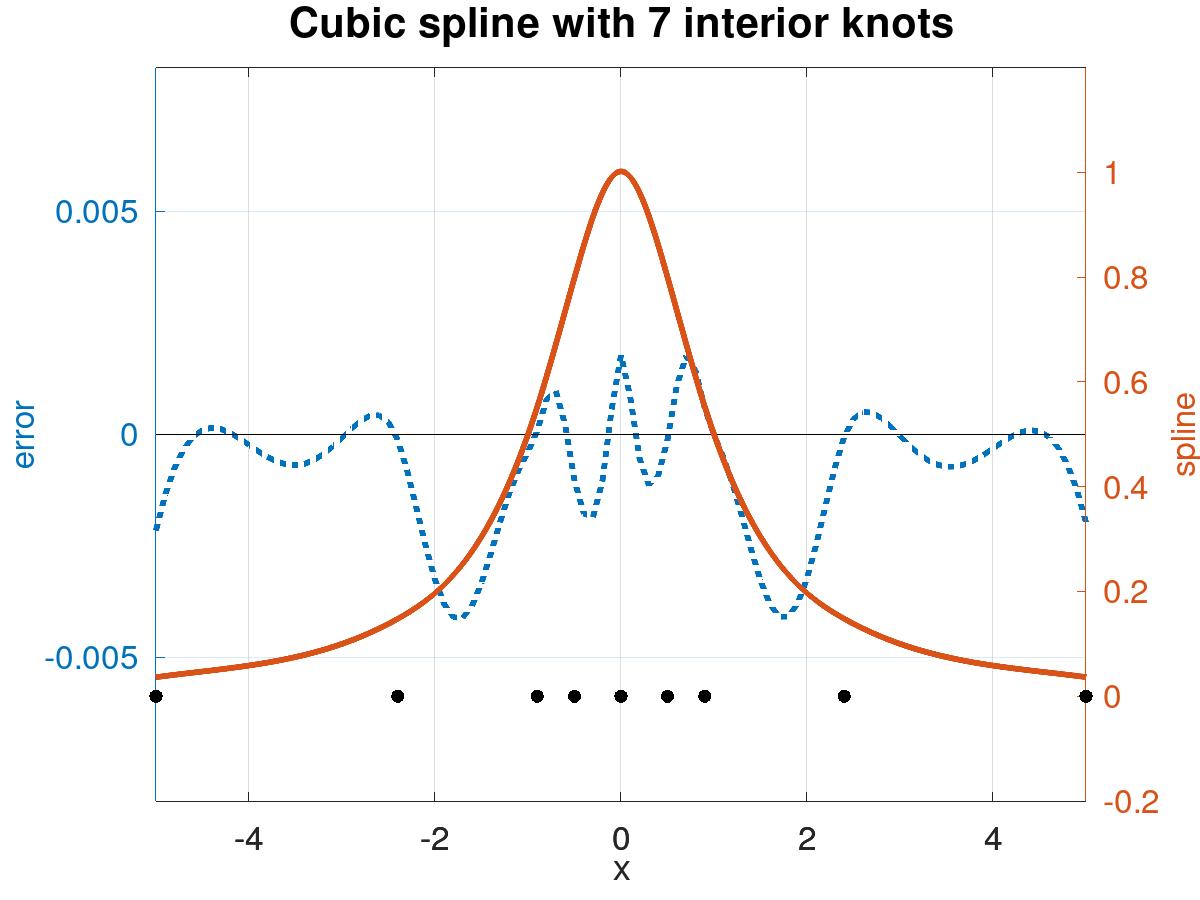}\hfill\includegraphics[width=.48\textwidth]{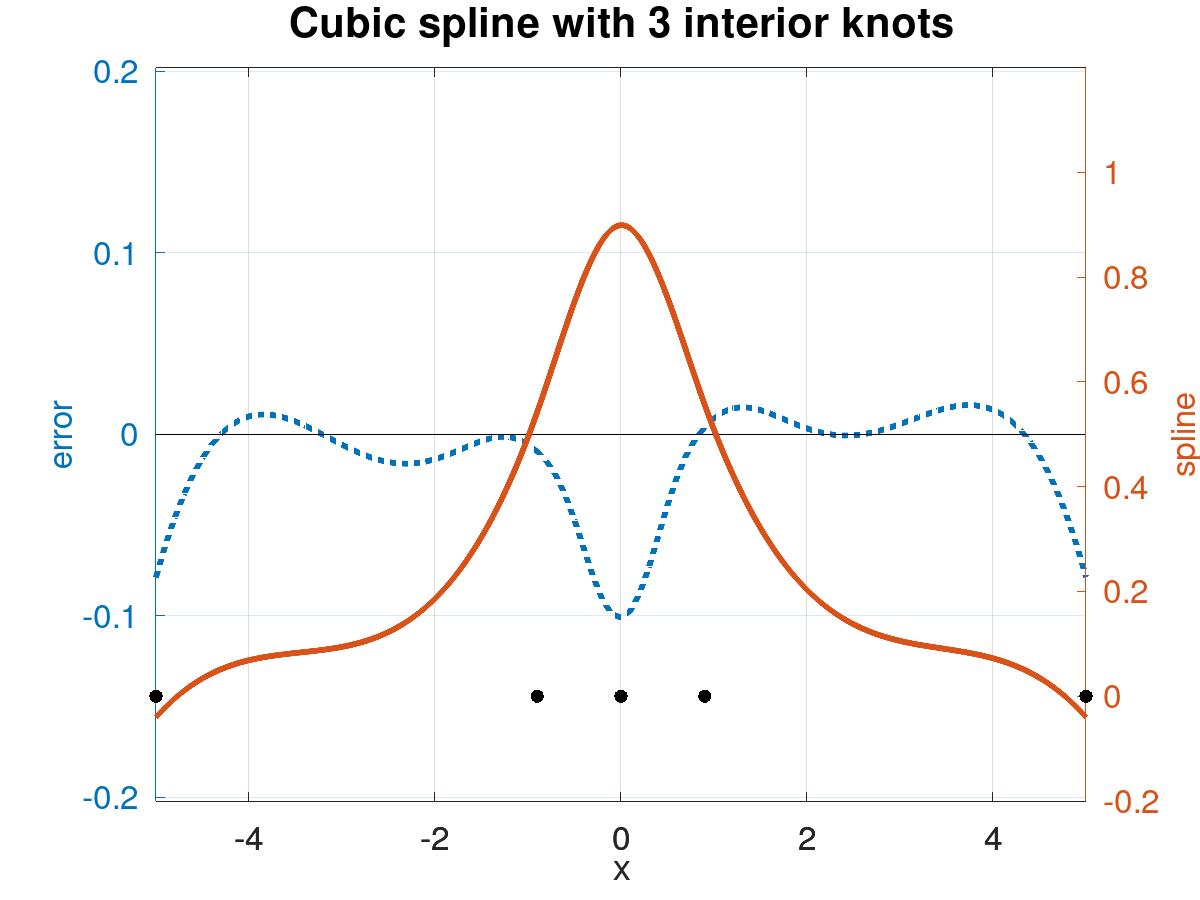}
		\caption{Coarsening up to seven (left) and three (right) interior knots after starting with 297 interior knots approximating a sample of the Runge function. The solid line corresponds to the approximating spline and the dotted line to the error at the sampled points.} 
		\label{F:Linfty}
	\end{center}
\end{figure}
\end{example}

\begin{example}[Heat equation with local adaptive coarsening]

We consider the heat equation $\frac{\partial u}{\partial t}-\frac{\partial^2 u}{\partial x^2} = 0$, for $0<x<10$ and $t>0$. We impose homogeneous Neumann boundary conditions for $t>0$ and the initial value $u(x,0)=u_0(x):= 1+\sin(x^{\frac{7}{20}} \exp(\frac{11x}{50}))$, for $0<x<10$, as showed in Figure~\ref{F:initial value} (left). 

For the spatial discretization we consider the space $\SSS= \SSS_p$ of splines of degree $\le p$ of maximum smoothness defined on a partition $Z$ of $1001$ breakpoints equally distributed in the interval $[0,10]$.  The approximation of the initial value is taken to be the $L^2$-projection $s_0\in\SSS_p$ of $u_0$. The initial error $\|s_0-u_0\|_{L^2(0,10)}$ is around $10^{-4}$ when considering the different values of $p=2,3,4$. We have used the assembly routines from~\cite{geopdes}.

For the time discretization we consider the Backward Euler method with time step size $\Delta t = 0.01$. For each fixed polynomial degree $p$, we proceed iteratively in two different ways to get an approximation of the solution $u(x,t)$ at time $t=1$. On the one hand, we consider the the same space $\SSS_p$ at each time step $t_k:=k\Delta t$,  and proceed as usual: we solve a discrete elliptic equation for computing the approximation $s_{k+1}$ of the solution at time $t_{k+1}$ provided the approximation $s_k$ at time $t_k$ is known. On the other hand, as an alternative procedure, we perform a coarsening of $s_k$ before computing the approximation $s_{k+1}$. More specifically, assuming that $s_k$ belongs to a spline space $\SSS_p^{(k)}$ we apply Algorithm~\ref{alg:H1adaptive knots removal} to find a spline space $\SSS_p^{(k+1)}\subset \SSS_p^{(k)}$ and $\cc{s}_k\in \SSS_p^{(k+1)}$ such that $\|\cc{s}_k-s_k\|_{H^1(0,10)}<10^{-3}$. Then, we use $\cc{s}_k$ to find $s_{k+1}$ belonging to $\SSS_p^{(k+1)}$. 

The numerical results show that the quality of both approximations of the solution $u(x,t)$ at time $t=1$ is similar, because we obtain that the difference in $L^2$-norm is around $10^{-4}$, in all cases ($p=2,3,4$). In Figure~\ref{F:initial value} (right) we show curves showing the number of degrees of freedom as a function of time for the different polynomial degrees considered.
In Figure~\ref{F:solution at time 1} we plot the solution at time $t=1$ that was obtained for the different polynomial degrees, showing also the breakpoints of the spline space at this final time.

\begin{figure}[h!tbp]
	\begin{center}
		\includegraphics[ width=.48\textwidth]{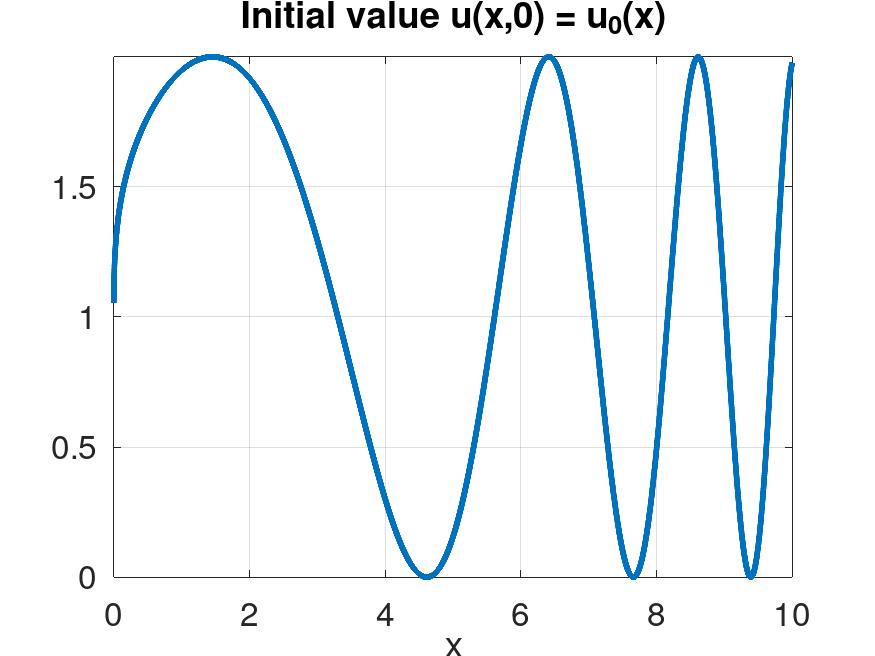}\hfill\includegraphics[width=.48\textwidth]{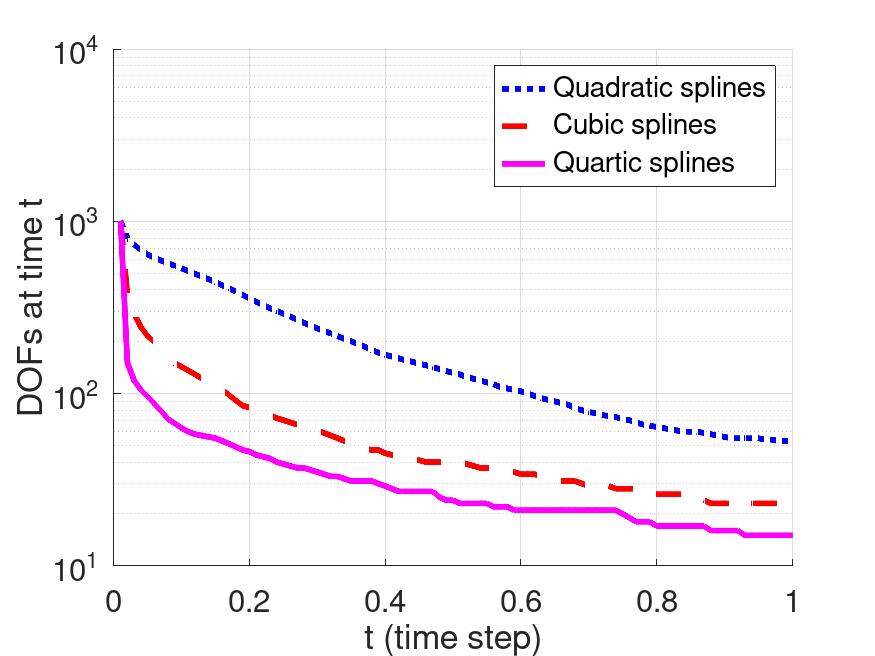}
		\caption{Initial data for the heat equation (left) and curves (right) showing the number of degrees of freedom as a function of time for the different polynomial degrees considered.} 
		\label{F:initial value}
	\end{center}
\end{figure}

\begin{figure}[h!tbp]
	\begin{center}
		\includegraphics[ width=.33\textwidth]{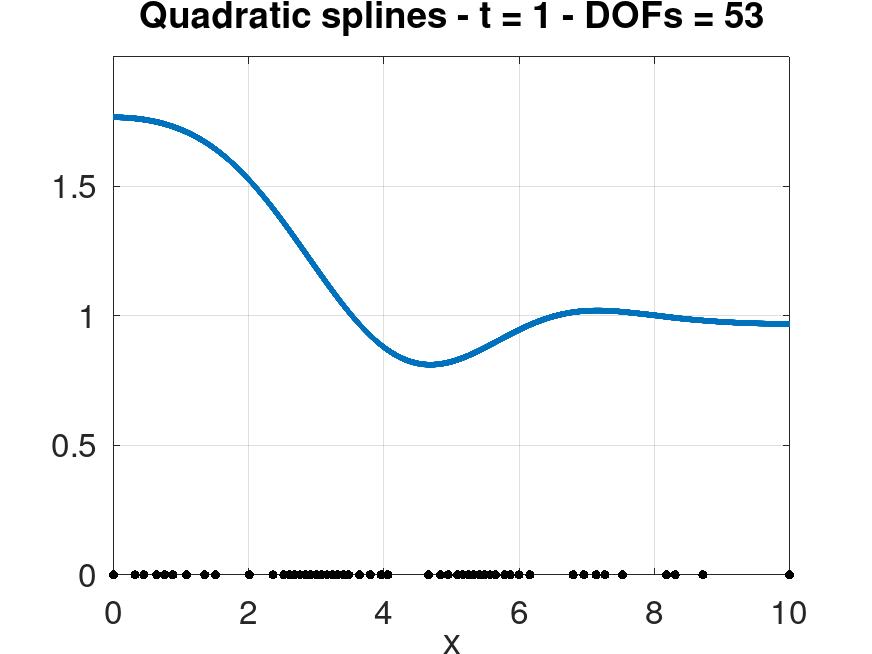}\hfill\includegraphics[width=.33\textwidth]{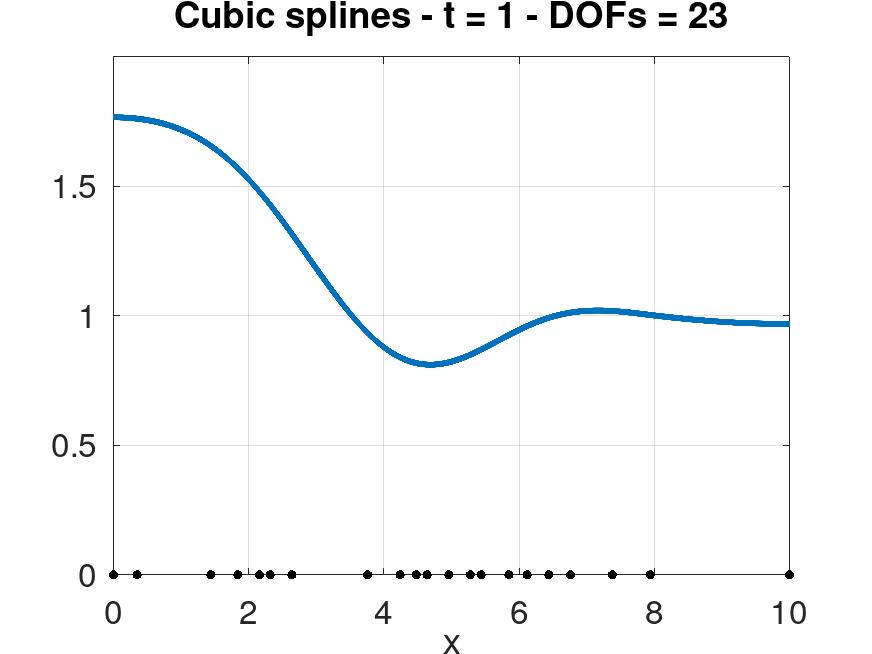}\hfill
  \includegraphics[ width=.33\textwidth]{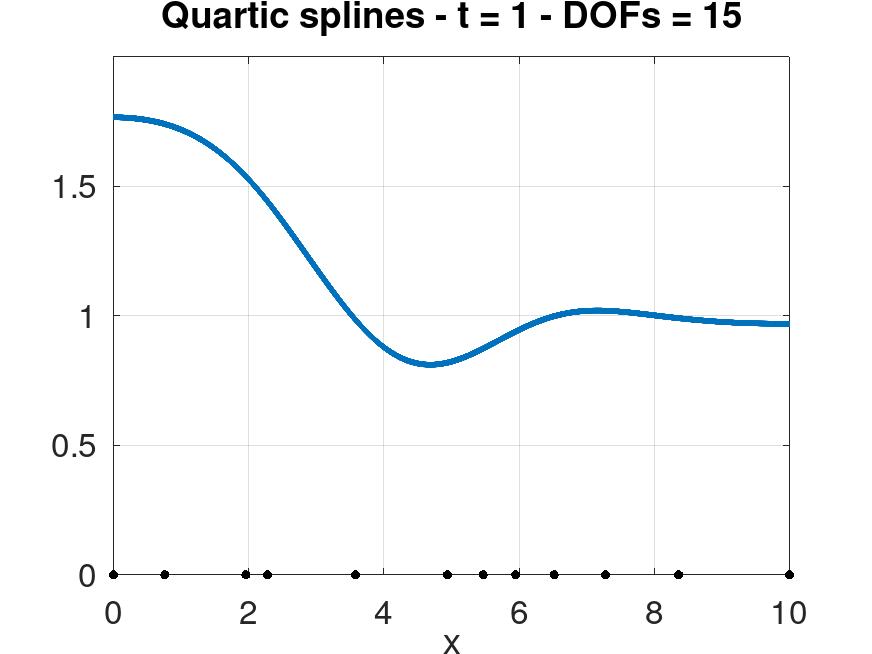}
		\caption{Solution at time $t=1$ obtained for the different polynomial degrees. The dots in the $x$-axis indicate the breakpoints of the spline space at this final time.} 
		\label{F:solution at time 1}
	\end{center}
\end{figure}
    
\end{example}

\bibliographystyle{plain}
\bibliography{biblio}

\end{document}